\newcommand{\norm}[1]{\lVert#1\rVert}
\newcommand{\one}{{\mathbbm{1}}}
\newtheorem{thm}{Theorem}[section]
\newtheorem{lemma}[theorem]{Lemma}
\newtheorem{remark}[theorem]{Remark}
\newtheorem{cor}[theorem]{Corollary}
\newcommand{\bea}{\begin{eqnarray*}}
\newcommand{\eea}{\end{eqnarray*}}
\newcommand{\ben}{\begin{eqnarray}}
\newcommand{\een}{\end{eqnarray}}
\newcommand{\beq}{\begin{equation}}
\newcommand{\eeq}{\end{equation}}
\newcommand{\R}{\ensuremath{\mathbb{R}}}
\newcommand{\Rm}{{\mathbb R}}
\newcommand{\Sm}{\ensuremath{\mathbb{S}}}
\newcommand{\bal}{\begin{aligned}}
\newcommand{\enbal}{\end{aligned}}
\newcommand{\be}{\begin{equation}}
\newcommand{\ee}{\end{equation}}
\renewcommand{\hat}[1]{\widehat{#1}}
\newcommand{\farc}{\frac}
\DeclareMathOperator\erfc{erfc}
\renewcommand{\d}{\partial}
\title{Reaction enhancement by flux-limited chemotaxis}
\author{Jing An
\thanks{School of Mathematical Sciences, Shanghai Jiao Tong University, Shanghai 200240, China; email: jingan1009@sjtu.edu.cn}
\and
Alexander Kiselev
\thanks{Department of
Mathematics, Duke University, Durham, NC 90320, USA; email: kiselev@math.duke.edu}
\and
Yao Yao
\thanks{Department of Mathematics, National University of Singapore, Singapore 119076, Singapore; email: yaoyao@nus.edu.sg}}
\begin{document}

\maketitle
\numberbysection

\begin{abstract}
Chemotaxis plays a crucial role in a variety of processes in biology and ecology. Quite often it acts to improve efficiency of biological reactions;
one example is the immune system signalling, where infected tissues release chemokines attracting monocytes to fight invading bacteria.
Another example is reproduction, where eggs release pheromones that attract sperm.
In this paper, we analyze a system of two reacting densities, one of which is chemotactic on another. Since the speed of any biological agents is limited,
we employ flux limited chemotaxis model. Our main result is the rigorous derivation of the scaling laws showing how presence of chemotaxis affects
the typical reaction time scale. This work builds on the results of \cite{kiselev2022chemotaxis}, which employed a classical Keller-Segel chemotaxis
term (not flux limited) -  leading to the effect of possible over concentration and restricting the results to radial data.
The model presented here is more reasonable biologically and covers broader parameter regimes.
\end{abstract}

\section{Introduction}

\subsection{The background}

Chemotaxis describes the motion of cells or species that sense and attempt to move
towards higher (or lower) concentration of some chemical. The first mathematical studies of chemotaxis go back to works of Patlak  \cite{Patlak} and
Keller-Segel \cite{KS1}, \cite{KS2}. In these papers, the focus is on the system of equations describing mold or bacteria that release signalling attractive chemical
helping them to aggregate. In the simplest form, this system can be reduced to a single parabolic-elliptic equation
\begin{equation}\label{chemo1}
\partial_t \rho - \Delta \rho +
\chi \nabla\cdot(\rho \nabla (-\Delta)^{-1} \rho)=0, \,\,\,\rho(x,0)=\rho_0(x).
\end{equation}
Here the density of the chemical $c$ is taken to be just $(-\Delta)^{-1}\rho,$ a reasonable model if one assumes that the chemical is produced and diffuses faster than other
relevant time scales in the problem. There has been much research on the equation \eqref{chemo1} and related models, in particular driven by dramatic analytic properties of solutions:
in dimensions $d \geq 2$ singularities can form in a finite time (see e.g. \cite{Pert} where more references can be found).

However, in many settings in biology where chemotaxis is present, it facilitates and enhances
success rates of reaction-like processes. In these settings, typically one species releases a signalling chemical,
while another is chemotactic on it; more than two different densities can be involved but we are going to focus on
the fundamental model.
One example is reproduction, where eggs secrete chemicals that attracts sperm
and help improve fertilization rates. This is especially well studied for marine life such as corals, sea urchins, mollusks,
etc (see \cite{HRZZ,RZ,ZR} for further
references), but the role of chemotaxis in fertilization extends to a great number of
species, including humans~\cite{Raltetal}.
In the same vein, many plants appeal primarily to the insects' sense of smell to attract pollinators and facilitate
fertilization. Another process where chemotaxis plays an important role is mammal immune systems fighting infections.
Inflamed tissues release special proteins, called chemokines,
that serve to chemically attract monocytes, blood killer cells, to the source
of infection \cite{Desh}, \cite{Taub}. In all these settings chemotaxis serves to enhance processes that can be
thought of as absorbing reactions between different species - the product is qualitatively different, as in the case of fertilization
or immune cells fighting invaders.

To the best of our knowledge, there has been limited mathematical research to model this kind of processes. The interesting question in this
context is how much the presence of chemotaxis affects the reaction rates? What is the influence of geometry of the initial data
as well as the structure of the ambient space (in the simplest case, dimension)? The evidence that the role of chemotaxis may be
quite essential was indirectly provided in \cite{ccw,chw,ds}, where models based on reaction-diffusion and on reaction-diffusion with advection
were shown to strongly under predict the fertilization success rates of marine animals observed in nature and lab experiments (see e.g. \cite{Lasker,Penn, Yund}). As far as we know, the first step in the direction of rigorous evaluation of the effect of chemotaxis on reaction rates was made in \cite{KR1,KR2}, where
a single density equation \eqref{chemo1} but with absorbing reaction and fluid advection was considered. The result showed significant impact of
chemotaxis on reaction rates, especially in the setting of weak reaction that is most natural from the biological point of view. Yet the single
density model does not reflect the fact that typically there are more than one species in the system, where only one species is chemotactic on the other.
In \cite{kiselev2022chemotaxis}, the authors considered a model
\begin{equation}
\begin{aligned}
&\partial_t \rho_1 - \kappa \Delta \rho_1 + \chi \nabla\cdot(\rho_1 \nabla(-\Delta)^{-1} \rho_2) = -\epsilon \rho_1 \rho_2\\
&\partial_t \rho_2 = -\epsilon \rho_1 \rho_2.
\end{aligned} 
\label{eq:before_normalization}
\end{equation}
with the aim of establishing estimates on influence of chemotaxis on reaction rates in the system setting.
The initial data was given by a stationary density $\rho_2$ supported in a unit (after rescaling) ball with the center at the origin while the
density $\rho_1$ with much larger mass (as is typical in applications) was supported at a distance $L$ away.
However, a modelling issue came up: the classical Keller-Segel model where the chemotaxis-induced drift is proportional to the gradient of the signalling chemical $c=(-\Delta)^{-1}\rho_2$ can potentially lead to over concentration of the density $\rho_1$ at the center of the support of $\rho_2.$
This is due to large size of $\chi \nabla c$ near the center of the support of $\rho_2.$ As a result, the supports of $\rho_1$ and $\rho_2$ may have reduced overlap
even once the density $\rho_1$ has been transported towards the origin, and this may result in artificially slow reaction rate. For this reason,
the consideration in \cite{kiselev2022chemotaxis} has been limited to the radial case, where one can be assured that $\rho_1$ will react with a large
portion of $\rho_2$ while passing through towards the center of $\rho_2$ support.

Of course, in any realistic system the speed of transport of biological agents is limited. In recent years, there has been much work
on chemotaxis models with limited size of drift - the so-called flux-limited chemotaxis (see e.g. \cite{BW,BBTW,CKWW,CPY,DS,JV1,HP,HPS,PVW,Winkler2021,Winkler2021a}) for further references.
In particular, papers \cite{DS,JV,PVW} provided derivation of the flux limited Keller-Segel system from kinetic models built on biologically
reasonable assumptions about behavior of the modeled organisms.

Our goal in this paper is to analyze a two-dimensional flux-limited chemotaxis system, in which $\rho_1\geq 0$ represents the chemically attracted density, and $\rho_2\geq 0$ denotes the density of attractant staying put at the center:
\begin{equation}\label{main_old}
\begin{aligned}
&\d_t \rho_1 - \tilde{\kappa} \Delta \rho_1 +\nabla\cdot\Big(\rho_1 \frac{\nabla c}{|\nabla c|}\Psi(|\nabla c|)\Big) = -\tilde{\epsilon} \rho_1 \rho_2\\
&\d_t \rho_2 = -\tilde{\epsilon} \rho_1 \rho_2, \quad\quad -\tilde{\sigma} \Delta c = \tilde{a}\rho_2.
\end{aligned}
\end{equation}
(here, we are using the parameters with tilde since we reserve the simpler notation for the parameters we will obtain after rescaling).
The drift term involving a cut-off function models the speed limit on how fast biological agents can move in reality.
We denote this sensitivity cut-off function $\Psi$. We are going to assume a simple form of $\Psi$ that is close to piecewise
linear. This allows us to essentially describe this function with two parameters $\tilde{v}_0$ and $\tilde \chi$.
To be precise, we define $\Psi: \R_+\to \R_+$ to be
a $C^\infty$ increasing function, $\Psi(0) = 0$, and furthermore (see Fig. \ref{fig:psi})
\begin{align}\label{cut-off}
    \Psi(z) = \begin{cases}
    \tilde{\chi} z, &\text{ for $0\leq z\leq \tilde \beta-\tilde \delta$}\\
    \psi(z), &\text{for $\tilde\beta-\tilde\delta<z< \tilde\beta$}\\
    \tilde{v}_0 &\text{ for $z\geq  \tilde\beta$}.
    \end{cases}
\end{align}
Near the origin, $\Psi$ grows linearly with the slope $\tilde \chi$, and then it gets capped by speed limit $\tilde{v}_0$. In order to make $\Psi\in C^\infty$, we introduce a sufficiently small $\tilde\delta>0$ such that $\tilde\delta \ll \tilde\beta$. In the buffer interval $(\tilde\beta-\tilde\delta, \tilde\beta)$, we use a smooth increasing function $\psi(z)\in C^\infty$ to connect the linear parts; we can arrange it so that $\psi(z) \geq \tilde \chi z$ and $\tilde\beta \tilde \chi = \tilde v_0.$    
\begin{figure}[h!]
\centering
\includegraphics[scale=1]{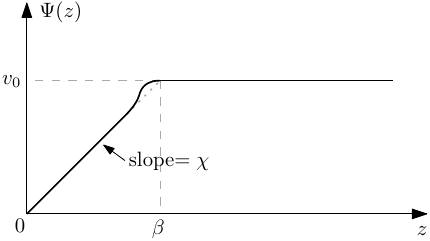}
 \caption{An illustration of the cut-off function $\Psi$ after performing the rescaling in Section~\ref{sec_rescale}. Here $\beta>0$ is the sensitivity, $v_0$ is the saturation value, and $\chi$ is the slope.}
 \label{fig:psi}
\end{figure}


We assume that the support of $\rho_1$ is initially located inside $|x| \sim \tilde{L}$ region. More precisely, we will suppose that the mass distribution of $\rho_1$ satisfies
\begin{equation*}
    \int_{|x|\leq \tilde{L}}\rho_1(x,0) dx = \tilde{M}_0,\quad \int_{|x|\leq \tilde{L}/2}\rho_1(x,0) dx \ll \tilde{M}_0.
\end{equation*}
The initial condition for $\rho_2$ has the form $\rho_2(x,0) = 2\theta \eta(x)$, with some $\theta>0$, and some $C_0^\infty$ function $\eta(x)$ that we assume for simplicity to be close to $\one_{B(0,\tilde{l})}(x)$, with $\one_{B(0,(1-\delta_0)\tilde{l})}(x)\leq \eta(x)\leq \one_{B(0,\tilde{l})}(x)$, where $\delta_0$ is sufficiently small. Here $\one_{B(0,\tilde{l})}(x)$ is the characteristic function of the ball with a radius $\tilde{l}>0$ centered at the origin. 

Driven by the chemo-attractant $c$ produced by $\rho_2$ near the origin, the substance $\rho_1$ will drift-diffuse towards stationary $\rho_2$ and react. The purpose of this paper is to study the reaction rate, namely the rate of decay of
    $\int_{\R^2} \rho_2(x,t) dx,$
which can be measured by ``half-time" $\tau:$ the time period over which approximately half of the initial mass of $\rho_2$ is consumed by the reaction.


\subsection{Rescaling}
\label{sec_rescale}

We will now rescale the chemotaxis system (\ref{main_old}) in time and space, so that we can simplify the model, reduce the number of parameters, and make the remaining ones dimensionless.
Let us first consider physical dimensions of parameters. We will write the dimension of length as $[L]$ and the dimension of time as $[T]$. Then we have
that $\tilde{\kappa}$ is of the dimension $[L^2 T^{-1}]$, $\tilde{\chi}$ is of dimension $[L^2 T^{-1}]$, $\tilde{v}_0$ is of dimension $[L T^{-1}]$, $\tilde \epsilon$ is of dimension $[T^{-1}]$, $\tilde a$ is of dimension $[T^{-1}]$, and $\tilde \sigma$ is of dimension $[L^2 T^{-1}]$.
The sensitivity $\tilde{\beta}$ that measures how quickly the saturation of the level of $\nabla c$ is reached, satisfies the relation $\tilde \chi \tilde \beta = \tilde{v}_0$, so its dimension is $[L^{-1}]$. Note that the chemical $c$ and densities  $\rho_1, \rho_2$ are dimensionless.
Take  $x' = x/\tilde{l}, ~t' = t\tilde{\kappa}/\tilde{l}^2$. We define the new parameters by $\chi =\tilde{\chi}/\tilde{\kappa},~v_0 = \tilde{v}_0 \tilde{l}/\tilde{\kappa},~\epsilon=  \tilde{\epsilon} \tilde{l}^2/\tilde{\kappa},~M_0 = \tilde{M}_0/\tilde{l}^2, L=\tilde{L}/\tilde{l}, \beta = \tilde{\beta} \tilde{l}$, $\sigma = \tilde{\sigma}/(\tilde{l}^2 \tilde a),$ and we also introduce $\gamma = \theta \chi/\sigma = \theta \tilde{\chi}/\tilde{\kappa}$ that will be a useful parameter to state the results.
By rescaling and the initial condition setup, we have the initial assumptions as
\begin{equation}\label{ic}
\begin{aligned}
& \int_{|x| \leq L} \rho_1(x,0)\,dx = M_0,  \,\,\,\,\, \int_{|x| \leq L/2} \rho_1(x,0)\,dx \ll M_0; \\
& 2\theta \one_{B(0, 1-\delta_0)}(x) \leq \rho_2(x,0) \leq  2\theta \one_{B(0, 1)}(x), \\
 & \text{and}\quad \frac{7\pi\theta}{4} \leq \|\rho_2(\cdot, 0)\|_1\leq 2\pi \theta,
\end{aligned}
\end{equation}
where in the last line we assumed that $\delta_0$ is sufficiently small to ensure the lower bound on $\|\rho_2(\cdot,0)\|_1.$
After rescaling, the cut-off function's slope and maximum are both changed as above and become dimensionless.
We still denote the resulting function $\Psi,$ and we will go back to using $x,t$ instead of $x',t'$ for the rescaled variables.
We have the following simplified model to investigate:
\begin{equation}\label{main}
\begin{aligned}
&\d_t \rho_1 - \Delta \rho_1 +\nabla\cdot\Big(\rho_1 \frac{\nabla c}{|\nabla c|}\Psi(|\nabla c|)\Big) = -\epsilon \rho_1 \rho_2\\
&\d_t \rho_2 = -\epsilon \rho_1 \rho_2, \quad\quad -\sigma\Delta c = \rho_2.
\end{aligned}
\end{equation}
If we remove the flux limitation by taking $\Psi(z) = \chi z$, then we recover the chemotaxis system that was analyzed in \cite{kiselev2022chemotaxis}. The results in \cite{kiselev2022chemotaxis} show quantitatively
how the chemotaxis term enhances the reaction rate, and significantly reduces the half-time in the parameter regime where $M_0 \epsilon\gg \gamma \gg 1$ and $M_0\gg \theta$. This regime is definitely relevant, for instance, in marine animal broadcast spawning \cite{kiselev2022chemotaxis}.
However, the results obtained in \cite{kiselev2022chemotaxis} have shortcomings. One major constraint is that, as we mentioned above, the initial conditions are assumed to be radially symmetric.
This is definitely not reasonable in applications. For the flux-limited chemotaxis, we will remove this radial limitation. We are also able to extend the results to new parameter regimes, in particular to the ``risky reaction" case $M_0 \epsilon \ll 1.$

\subsection{Main result}

In this paper, we will make certain assumptions on the parameters of the problem. The first one is that $v_0 \leq 1.$ In the case where $v_0 \geq 1$ one can also obtain estimates on the reaction half-time, but the effect of over-concentration
similar to the classical Keller-Segel chemotaxis term may be present for large $v_0.$ We focus on the regime $v_0 \leq 1$ as it is more interesting to us, being relevant in at least some applications - such as broadcast spawning
of marine animals. Indeed, see \cite{chemoshear} for discussion of values of different parameters that, given our definition $v_0 = \tilde{v}_0 \tilde{l}/\tilde{\kappa},$ indicates that $v_0 \leq 1.$

The second assumption that we are going to make is that $\gamma$ is sufficiently large, $\gamma \gg 1$ (specifically, $\gamma \geq 16$ would work for the conclusions of the main theorem below). For small $\gamma,$ it is not clear that chemotaxis will offer a significant
improvement in reaction rates compared to diffusion. In fact, the local mass equation \eqref{eq:M} that we will use for transport estimates suggests that chemotaxis effects may be more subtle and perhaps transient if $\gamma$ is not sufficiently large.

Finally, we will also assume that $M_0 \gg \theta,$ and in fact that $M_0 v_0^2 \gg \theta.$ This is a natural assumption in many applications, in particular in reproduction where the size of $M_0$ is typically dominant.

The techniques that we develop should be useful in analyzing other parameter regimes as well, but we focus on the above three natural assumptions in order to avoid considering too many cases and making the
paper unwieldy.

Let us define the ``half-time" $\tau$ that we will use throughout the paper by
\begin{equation}\label{halftime}
    \tau:=\sup\Big\{t\geq 0: \int_{\R^2} \rho_2(x,t) dx \geq \pi \theta \Big\}.
\end{equation}
Then the main result of this paper is as follows.
\begin{thm}\label{thm:main}
    Assume that the initial conditions $\rho_1(x,0)$ and $\rho_2(x,0)$ are as described in \eqref{ic}.
    Suppose that $v_0 \leq 1,$ $\gamma \gg 1,$ and $M_0 v_0^2 \gg \theta.$
Let  $R_0 = \frac{\gamma}{2v_0}-1.$
Then the half-time $\tau$ for the flux-limited system \eqref{main} satisfies
\be\label{mainres}
\tau \leq \left\{ \begin{array}{ll}
C \left( \frac{1}{v_0^2} + \frac{1}{\epsilon v_0^2 M_0} \right), & L \leq v_0^{-1} \\
C \left( \frac{L}{v_0} + \frac{1}{\epsilon v_0^2 M_0} \right), & v_0^{-1} \leq L \leq R_0 \\
C \left( \frac{L^2}{\gamma}+ \frac{\gamma}{v_0^2} + \frac{1}{\epsilon v_0^2 M_0} \right), & R_0 \leq L. \\
\end{array} \right.
\ee
\end{thm}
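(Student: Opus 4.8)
The plan is to split $\tau$ into a \emph{transport time} $T_1$, during which the bulk of the mass $M_0$ of $\rho_1$ is carried from $|x|\sim L$ into a fixed ball about the origin, plus a \emph{reaction time} $T_2\asymp 1/(\epsilon v_0^2M_0)$ during which $\rho_1$ overlaps $\rho_2$ and consumes it; the three cases of \eqref{mainres} are the three regimes of $T_1$. The starting point is a few a priori facts. Since $\partial_t\rho_2=-\epsilon\rho_1\rho_2\le 0$ we have $\rho_2(x,t)=\rho_2(x,0)\exp(-\epsilon\int_0^t\rho_1(x,s)\,ds)$, so $\supp\rho_2(\cdot,t)\subseteq B(0,1)$ for all $t$, $\pi\theta\le\|\rho_2(\cdot,t)\|_1\le 2\pi\theta$ for $t\le\tau$, and $\int_0^\tau\epsilon\int\rho_1\rho_2\,dx\,dt=\|\rho_2(\cdot,0)\|_1-\|\rho_2(\cdot,\tau)\|_1\le\pi\theta$, so the reaction removes at most $\pi\theta\ll v_0^2M_0\ll M_0$ of the mass of $\rho_1$ over $[0,\tau]$ and is a negligible sink. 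From $-\sigma\Delta c=\rho_2$ with $\rho_2\ge0$ supported in $B(0,1)$ of mass in $[\pi\theta,2\pi\theta]$, the Newtonian representation of $\nabla c$ gives absolute constants $c_1,c_2,c_3,r_*>0$ with $c_1\theta/(\sigma|x|)\le|\nabla c|\le c_2\theta/(\sigma|x|)$ and $\nabla c\cdot\tfrac{x}{|x|}\le -c_3|\nabla c|$ for $|x|\ge r_*$ (the inward bound uses only positivity and the support of $\rho_2$). Hence the drift $u:=\tfrac{\nabla c}{|\nabla c|}\Psi(|\nabla c|)$ obeys $|u|\le v_0$ everywhere, $u\cdot\tfrac{x}{|x|}\le -c\,v_0$ for $r_*\le|x|\lesssim R_0$, and $u\cdot\tfrac{x}{|x|}\le -c\,\gamma/|x|$ for $|x|\gtrsim R_0$; the tiny region $|x|\lesssim v_0/\gamma$, where $|\nabla c|<\beta$ again, plays no role.

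For the transport phase I would work with $M(r,t):=\int_{B(0,r)}\rho_1\,dx$ and derive the local mass inequality \eqref{eq:M}: with $\partial_r M=\int_{\partial B_r}\rho_1\,dS$, the identity $\int_{\partial B_r}\partial_n\rho_1\,dS=\partial_{rr}M-\tfrac1r\partial_r M$, and the flux bound $-\int_{\partial B_r}\rho_1(u\cdot n)\,dS\ge c\,v_0\,\partial_r M$ for $r_*\le r\lesssim R_0$ (resp.\ $\ge c\,\tfrac{\gamma}{r}\,\partial_r M$ for $r\gtrsim R_0$), one gets $\partial_t M\ge\partial_{rr}M+(c\,v_0-\tfrac1r)\partial_r M-\epsilon\int_{B_r}\rho_1\rho_2$ (resp.\ with $c\gamma/r$). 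The point at which the argument departs from \cite{kiselev2022chemotaxis} is that the flux bound must hold \emph{on every sphere} $\partial B_r$, which needs the uniform inward-direction control of $u$ above; in the radial setting it is automatic. Then, using $\int_{B(0,L/2)}\rho_1(\cdot,0)\ll M_0$ to localize the initial mass in $\{L/2\le|x|\le L\}$, I would construct an explicit radial sub-solution $\underline M\le M$ of \eqref{eq:M} whose inner edge moves along the characteristic $\dot r=\tfrac1r-c\,v_0\asymp -v_0$ (regularized by the diffusion term, the reaction sink being absorbed since its time integral is $\le\pi\theta\ll M_0$), chosen so that $\underline M(r_*,T_1)\ge\tfrac12M_0$: for $v_0^{-1}\le L\le R_0$ the speed stays $\gtrsim v_0$ down to $r\sim r_*$, giving $T_1\asymp L/v_0$; for $L\le v_0^{-1}$ the mass already sits within $O(v_0^{-1})$ of the origin and diffusion fills $B(0,r_*)$ in $T_1\asymp v_0^{-2}$; for $L\ge R_0$ one first runs a diffusion-dominated phase in $|x|\gtrsim R_0$ along $\dot r=\tfrac{1-c\gamma}{r}$, i.e.\ $r^2\approx L^2-c'\gamma t$, reaching $|x|\asymp R_0$ by time $\asymp L^2/\gamma$, and then appends the saturated phase of duration $\asymp R_0/v_0\asymp\gamma/v_0^2$. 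In each case $M(r_*,T_1)\ge\tfrac12M_0$ with $T_1$ equal to the transport terms of \eqref{mainres}.

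To close, testing the $\rho_1$ equation against an increasing radial weight $\zeta$ supported in $\{|x|\ge R\}$ with $R\asymp v_0^{-1}$ and $\Delta\zeta\le\tfrac{c}{2}v_0\zeta'$ there (possible since $\tfrac1r\le\tfrac c2 v_0$ for $r\ge R$) yields $\tfrac{d}{dt}\int\rho_1\zeta\le0$, so the mass outside $B(0,CR)$ stays $\le\tfrac12M_0$ and $\int_{B(0,C/v_0)}\rho_1\ge\tfrac13M_0$ for all $t\in[T_1,\tau]$. Since $v_0\le 1$ this ball has radius $\gtrsim 1$; a parabolic (weak-Harnack / Gaussian lower) estimate for the linear equation $\partial_t\rho_1=\Delta\rho_1-u\cdot\nabla\rho_1-(\nabla\cdot u+\epsilon\rho_2)\rho_1$ then upgrades this to $\rho_1(x,t)\ge c\,v_0^2M_0$ for $x\in\supp\eta$ away from a negligible ball about the origin and for $t\in[T_1+Cv_0^{-2},\tau]$; this is exactly where flux limitation is essential --- $|u|\le v_0$ together with control of $(\nabla\cdot u)_+$ rules out the over-concentration the classical Keller--Segel drift would allow, and the $\epsilon\rho_1\rho_2$ loss is harmless because $\theta\ll v_0^2M_0$. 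Consequently, for $t\le\tau$, $\pi\theta\le\|\rho_2(\cdot,t)\|_1=\int\rho_2(x,0)e^{-\epsilon\int_0^t\rho_1(x,s)\,ds}\,dx$, where the part over $\supp\eta$ (minus the negligible ball) is $\le 2\pi\theta\,e^{-c\,\epsilon v_0^2M_0\,(t-T_1-Cv_0^{-2})}$ and the part over the $\delta_0$-annulus and the negligible ball is $<\pi\theta/2$; hence $t-T_1-Cv_0^{-2}\le C/(\epsilon v_0^2M_0)$. Therefore $\tau\le T_1+Cv_0^{-2}+C/(\epsilon v_0^2M_0)$, and absorbing $Cv_0^{-2}$ into the transport term --- it is dominated by $L/v_0$ when $L\ge v_0^{-1}$, by $\gamma/v_0^2$ when $\gamma\ge 1$, and coincides with the first term when $L\le v_0^{-1}$ --- gives \eqref{mainres}.

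I expect the main obstacles to be precisely those two steps, which have no counterpart in the radial analysis of \cite{kiselev2022chemotaxis}: (i) making the local-mass inequality \eqref{eq:M} and the ensuing barrier argument work for genuinely non-radial $\rho_1$ (hence non-radial $\rho_2$ and $c$), which forces one to control the \emph{direction} of the drift on each sphere rather than a one-dimensional radial profile; and (ii) passing from the ball-averaged bound $\int_{B(0,C/v_0)}\rho_1\gtrsim M_0$ to a genuinely pointwise lower bound for $\rho_1$ on $\supp\eta$, uniformly in the reaction strength $\epsilon$, which rests essentially on the boundedness of the flux-limited drift preventing $\rho_1$ from over-concentrating near the origin.
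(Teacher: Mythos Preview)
Your overall architecture---split $\tau$ into a transport time $T_1$ plus a reaction time $\asymp 1/(\epsilon v_0^2 M_0)$, work with the local mass $M(r,t)$, establish a uniform inward bound on the radial component of the drift on each sphere, and close with a Harnack-type pointwise lower bound---is exactly the paper's. The sphere-by-sphere drift bound you sketch ($u\cdot\tfrac{x}{|x|}\le-cv_0$ for $r_*\le|x|\lesssim R_0$, and $\le -c\gamma/|x|$ beyond) is the content of the paper's Lemma~\ref{lem418}, which makes this precise by optimizing over all admissible $\rho_2$ via the level-set structure of the Newtonian kernel.

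Where you diverge is in the two technical engines. For the transport estimate, the paper does \emph{not} build a subsolution for $M$ directly. Instead it introduces a radial comparison potential $H$ with $\partial_r H\ge\sup_{\partial B_r}(u\cdot\hat n)$, passes to the linear Fokker--Planck equation $\partial_t u_*-\Delta u_*+\nabla\cdot(u_*\nabla H)=0$, and then uses a \emph{duality} lemma: if $f$ solves the adjoint $\partial_t f=\Delta f+\nabla H\cdot\nabla f$, then $\int f_0\,u_*(\cdot,t)=\int f(\cdot,t)\,u_*(\cdot,0)$. One then constructs explicit expanding radial subsolutions $\omega_{\varphi(t)}(r)$ for $f$ (using convexity, the conserved quantity $\int fe^H$, and radial monotonicity of $f$), and reads off lower bounds on $\int_{B_R}u_*$ for all $t\ge t_*$ at once. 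This buys the persistence of the mass bound for free---your separate Lyapunov trapping step with the weight $\zeta$ becomes unnecessary---and the barrier construction is cleaner because the dual equation has a maximum principle and a built-in invariant. Your direct characteristic/barrier approach for $M$ is plausible, but you would have to build the barrier carefully in both regimes, and your Lyapunov argument as written is fragile in the region $|x|>R_0$ where the inward drift is only $\sim\gamma/|x|\ll v_0$.

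The more serious issue is your Harnack step. You write the $\rho_1$ equation in non-divergence form with zeroth-order coefficient $\nabla\cdot u+\epsilon\rho_2$ and invoke control of $(\nabla\cdot u)_+$. But in the saturated regime $u=v_0\,\nabla c/|\nabla c|$ is $v_0$ times a \emph{unit} vector field, whose divergence is the curvature of the level sets of $c$; since $\rho_2(\cdot,t)$ becomes genuinely non-radial once the reaction starts, there is no a priori uniform bound on this quantity (and $|\nabla c|$ may vanish inside $B_1$). The paper sidesteps this entirely: it uses Aronson's duality $\Gamma(x,y,t,s)=\tilde\Gamma(y,x,t,s)$, where $\tilde\Gamma$ is the fundamental solution of the \emph{adjoint} equation $\partial_t\phi-\Delta\phi-b\cdot\nabla\phi=0$, and applies explicit Gaussian lower bounds (Hill; Hamel--Henderson) that require only $\|b\|_\infty\le v_0$---precisely the flux limitation---and nothing about $\nabla\cdot u$. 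The reaction loss is then handled by Duhamel exactly as you suggest, producing a small exceptional set $S(t)$ of measure $\lesssim\theta/(v_0^2 M_0)$ rather than a uniform pointwise bound; this suffices because $\|\rho_2\|_\infty\le 2\theta$ and $v_0^2 M_0\gg\theta$.
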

\begin{remark}
Note that under our assumptions, $R_0 \geq v_0^{-1} \geq 1.$ In the $L \geq R_0$ case, we could replace $L^2/\gamma$ in the estimate for $\tau$ with $(L-R_0)^2/\gamma,$ but since $R_0^2/\gamma \sim \gamma/v_0,$ this does not lead to a nontrivial improvement
and we choose a simpler formula.
\end{remark}

\begin{remark}
The estimate \eqref{mainres} agrees with that suggested by heuristic arguments.
In \eqref{mainres}, $L^2/\gamma$ is the time of transport from $|x| \sim L$  to $|x| \sim R_0$ that is driven by chemotaxis before the speed limit $v_0$ is reached. The times $\frac{\gamma}{v_0^2} = \frac{R_0}{v_0}$ and $\frac{L}{v_0}$
correspond to transport from $|x| \sim R_0$ or $|x| \sim L$ to $|x| \sim v_0^{-1}$, and in this range chemotaxis produces drift $\sim v_0.$ At the distance $|x| \sim v_0^{-1}$ diffusion and chemotaxis balance - it will be explained more clearly in the beginning of Section~\ref{R0transport}.
The time $v_0^{-2}$ corresponds to diffusive equilibration in the region $|x| \sim v_0^{-1}.$ Finally, $\frac{1}{\epsilon v_0^2 M_0}$ is the reaction time. The factor $v_0^{-2}$ appears because the mass of the density $\rho_1$ of the order $M_0$ will be spread over
the ball of radius $\sim v_0^{-1}.$
\end{remark}

\begin{remark}
In what follows, we will focus on the $R_0 \leq L$ case, since the argument for this case includes all the techniques, estimates and steps necessary to handle the other options, with minor adjustments.
In order to not make the paper too technical, we leave these adjustments to interested readers.
\end{remark}

To contrast \eqref{mainres} with the pure reaction-diffusion, it was shown in \cite{kiselev2022chemotaxis} that when chemotaxis is absent, the purely reaction-diffusion half-time
$\tau_D$ satisfies $\tau_D \gtrsim L^2 /\log (M_0 \epsilon)$ when $M_0 \epsilon \gg 1$
and $\tau_D \gtrsim L^2 e^{\frac{C}{M_0 \epsilon}}$ when $M_0 \epsilon \ll 1.$
Thus it is clear that in all cases, chemotaxis can reduce the half-time significantly when $\gamma$ is large. In addition, there is a dramatically improved
dependence of the reaction rate on $M_0 \epsilon$ in the case of a ``risky" reaction where $M_0 \epsilon \ll 1.$

\section{Transport: problem reformulation}

The main part of the argument will focus on understanding the transport of the density $\rho_1$ towards the
support of $\rho_2.$ For this purpose,
let us consider the local mass $M(r,t) := \int_{B(0,r)} \rho_1(x,t) dx.$ Using (\ref{main}), one can compute that it satisfies the following equation:
\begin{equation}\label{eq:M}
\partial_t M = \partial_{rr} M - \frac{1}{r} \partial_r M -  \int_{\partial B(0,r)} \rho_1 \Psi(|\nabla c|)\frac{\nabla c}{|\nabla c|}\cdot \hat n  dS(x) -   \int_{B(0,r)} \epsilon \rho_1 \rho_2 dx,
\end{equation}
with $\hat{n}=x/|x|$ the unit normal vector. We will consider evolution of the local mass  $M(r,t)$ for $t\in[0,\tau]$, where $\tau$ is the reaction half-time (\ref{halftime}). Instead of directly dealing with the nonlinear flux-limited chemotaxis term in (\ref{eq:M}),
in Section~\ref{Hsec} we will construct a radially symmetric function $H(x)$ that satisfies
\begin{equation}\label{def:H}
\partial_r H(r) \geq  \sup_{\partial B(0,r)} \Psi(|\nabla c|)\frac{\nabla c\cdot \hat n}{|\nabla c|}, \quad \text{for}~0\leq t\leq \tau~~\text{and for any }r>0.
\end{equation}
This will effectively linearize the chemotactic drift.

Right now, we want to simplify the problem by absorbing the nonlinear reaction term. Let $h(t) := \int_{\mathbb{R}^2} \epsilon \rho_1(x, t) \rho_2(x,t) dx$. Integrating $\rho_2$ equation in (\ref{main}) in space, we obtain
\begin{equation}
    \d_t \int_{\R^2} \rho_2(x,t) dx = -h(t).
\end{equation}
Integrating in time from $0$ to $\tau$, we get  that
\begin{equation}
    \int_0^{\tau} h(s) ds = \int_{\R^2} \rho_2(x,0) dx-\int_{\R^2} \rho_2(x,\tau) dx\leq \pi \theta,
\end{equation}
using the definition of $\tau$.

With $H(x)$ and $h(t)$ introduced, and since $\partial_r M \geq 0,$ equation \eqref{eq:M} implies
\begin{equation}
\partial_t M \geq \partial_{rr} M - \frac{1}{r} \partial_r M - \partial_r M \,\partial_r H -  h(t).
\end{equation}
Let $F(r,t) := M(r,t) + \int_0^t h(s)ds$, then $F$ satisfies
\begin{equation}
\begin{split}
\partial_t F &= \partial_t M + h(t)\geq \partial_{rr} F - \frac{1}{r} \partial_r F - \partial_r F \, \partial_r H \quad\text{ for all }t\in[0,\tau].
\end{split}
\label{ineq:tilde_M}
\end{equation}

We introduce an auxiliary Fokker-Planck equation to control $F(r,t)$,
\begin{equation}
\partial_t u - \Delta u + \nabla \cdot (u \nabla H) = 0,
\label{eq:fokker_planck}
\end{equation}
with the initial data $u_0 = \rho_1(x,0).$
Consider $M_u(r,t) := \int_{B(0,r)} u(x,t) dx$, then from (\ref{eq:fokker_planck}) $M_u$ satisfies the following equation
\begin{equation}\label{ineq:M_u}
\partial_t M_u = \partial_{rr} M_u - \frac{1}{r} \partial_r M_u - \partial_r M_u \,\partial_r H
\end{equation}
with initial condition $M_u(r,0) = F(r,0)$. We apply the comparison principle  to equations \eqref{ineq:tilde_M} and \eqref{ineq:M_u}, and it immediately yields that $F(r,t) \geq M_u(r,t)$. In other words,
\begin{equation}
M(r,t) \geq M_u(r,t) - \int_0^t h(s) ds \geq  M_u(r,t) - \pi \theta, \quad\text{ for all }t\in[0,\tau].
\label{ineq:rho_vs_u}
\end{equation}
This series of reformulations helps reduce the problem to analyzing the Fokker-Planck equation (\ref{eq:fokker_planck}). If we can obtain a good lower bound for $M_u(r,t)$ from (\ref{ineq:M_u}), then
using the $M_0 \gg \theta$ assumption we can estimate the time when $M(r,t)$ becomes sufficiently large for small $r$. This implies
that the substance $\rho_1$ has been transported to the center where $\rho_2$ is located and it will remain to estimate the local reaction time between the two densities.

\section{Construction of $H$}\label{Hsec}
We now construct $H$ such that (\ref{def:H}) holds, given the form of the sensitivity cut-off function \eqref{cut-off}.

\begin{lemma}\label{lem418}
Suppose that $\gamma$ is sufficiently large so that $R_0 = \frac{\gamma}{2v_0} -1 >1.$ In order to satisfy \eqref{def:H}, we can define $H'(r)$ as a smooth function for $r>0$ that is given by
\begin{align}\label{defH48}
\partial_r H (r)= \left\{ \begin{array}{ll}
 -\frac{\gamma}{4r}, & r > R_0 \\
 \eta_1(r), & R_0 \geq r \geq R_0-1 \\
 -v_0, & R_0-1 \geq r > 1 \\
 \eta_2(r), & 1 \geq r > r_0 \\
 v_0, & r_0 \geq r \geq 0.
\end{array}
\right.
\end{align}
See Figure~\ref{fig_H_der} for an illustration of $\partial_r H(r)$.
Here $r_0 \in (1/\sqrt{2},1)$ depends on $\gamma$ and will be specified later, while $\eta_1$ and $\eta_2$ are smooth functions connecting different transport regimes to make $H(r)$ smooth.
The function $\eta_1$ can be chosen to satisfy $-\frac{\gamma}{4r} \geq \eta_1(r) \geq -v_0,$ and the function $\eta_2$ can be chosen to satisfy $v_0 \geq \eta_2(r) \geq -v_0.$
\end{lemma}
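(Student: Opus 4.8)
The plan is to return to the elliptic relation $-\sigma\Delta c=\rho_2$ and work with the explicit two-dimensional representation
\[
\nabla c(x,t)=-\frac{1}{2\pi\sigma}\int_{\R^2}\frac{x-y}{|x-y|^2}\,\rho_2(y,t)\,dy.
\]
Two soft facts about $\rho_2$ are available for free from \eqref{main}: since $\partial_t\rho_2=-\epsilon\rho_1\rho_2\le 0$, the support of $\rho_2(\cdot,t)$ stays inside $\overline{B(0,1)}$ for every $t$; and for $t\in[0,\tau]$ one has $\pi\theta\le\|\rho_2(\cdot,t)\|_1\le 2\pi\theta$, the upper bound from \eqref{ic} and the lower bound from the definition of $\tau$. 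Thus the lemma reduces to a pointwise bound, uniform in $t\in[0,\tau]$ and over $|x|=r$, for $\Psi(|\nabla c|)\frac{\nabla c\cdot\hat n}{|\nabla c|}$ by the piecewise profile in \eqref{defH48}; after this reduction nothing is used about $\rho_2$ beyond $\supp\rho_2\subseteq\overline{B(0,1)}$ and the mass bounds.

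The next step is the kernel estimates. Writing $\hat n=x/|x|$ and splitting $y=y_1\hat n+y_\perp$, for $|x|=r>1$ and $|y|\le 1$ one checks that $t\mapsto\frac{r-t}{r^2+1-2rt}$ is increasing on $[-1,1]$ when $r>1$, hence $\frac{1}{r+1}\le\frac{(x-y)\cdot\hat n}{|x-y|^2}\le\frac{1}{r-1}$, while $\frac{|y_\perp|}{|x-y|^2}\le\frac{1}{|x-y|}\le\frac{1}{r-1}$ and, after a short optimization, the sharper $\frac{|y_\perp|}{|x-y|^2}\le\frac{C}{r^2}$ once $r\ge 2$. Integrating against $\rho_2$ and inserting the mass bounds yields, for $r>1$: $\nabla c\cdot\hat n<0$ with $\frac{\theta}{2\sigma(r+1)}\le|\nabla c\cdot\hat n|\le\frac{\theta}{\sigma(r-1)}$; $|\nabla c|\le\frac{\theta}{\sigma(r-1)}$; and $\frac{|\nabla c\cdot\hat n|}{|\nabla c|}\ge\bigl(1+C/r^2\bigr)^{-1/2}=1-O(r^{-2})$, i.e.\ $\nabla c$ is nearly antiparallel to $\hat n$ for large $r$. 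Using $\frac{\theta}{\sigma}=\frac{\gamma}{\chi}$ and $\beta=\frac{v_0}{\chi}$ from the rescaling, the magnitude bounds become: $|\nabla c|\ge\frac{\gamma}{2\chi(r+1)}\ge\beta$ whenever $r\le R_0$, so the flux limiter is saturated, $\Psi(|\nabla c|)=v_0$, on all of $0<r\le R_0$; and $|\nabla c|\le\frac{\gamma}{\chi(r-1)}\le\beta$ once $r\ge 1+\gamma/v_0$, so $\Psi$ lies in its linear/buffer regime there.

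With these in hand I would verify \eqref{defH48} region by region. For $r\ge R_0$, split on $|\nabla c|\le\beta$ versus $|\nabla c|>\beta$: in the first case $\Psi(z)\ge\chi z$ and $\nabla c\cdot\hat n<0$ give $\Psi(|\nabla c|)\frac{\nabla c\cdot\hat n}{|\nabla c|}\le\chi(\nabla c\cdot\hat n)\le-\frac{\gamma}{2(r+1)}\le-\frac{\gamma}{4r}$; in the second case $\Psi(|\nabla c|)=v_0$, and since $\frac{\gamma}{4v_0 r}\le\frac{\gamma}{4v_0 R_0}=\frac{\gamma}{2\gamma-4v_0}<1$, the quantitative near-radiality bound (close to $1$ for $r\ge R_0$ once $\gamma$ is large) gives $v_0\frac{\nabla c\cdot\hat n}{|\nabla c|}=-v_0\frac{|\nabla c\cdot\hat n|}{|\nabla c|}\le-\frac{\gamma}{4r}$; so $\partial_r H=-\gamma/(4r)$ is admissible for $r>R_0$. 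For $1<r<R_0$ we have $\Psi(|\nabla c|)=v_0$, so $v_0\frac{\nabla c\cdot\hat n}{|\nabla c|}\le-v_0\bigl(1+C/r^2\bigr)^{-1/2}$, i.e.\ the radial drift is $\le-v_0$ up to an $O(r^{-2})$ defect; on the bulk of this range this matches the value $-v_0$, while the defect near the endpoints is absorbed by the interpolant $\eta_1$ near $r=R_0$ (possible since $\gamma/(4R_0)\le v_0$, so $-\gamma/(4r)\ge\eta_1\ge-v_0$ is a genuine interval) and by $\eta_2$ near $r=1$. On $0\le r\le r_0$ the trivial bound $\Psi(|\nabla c|)\frac{\nabla c\cdot\hat n}{|\nabla c|}\le\Psi(|\nabla c|)\le v_0$ makes $\partial_r H=v_0$ admissible (here $\nabla c\cdot\hat n$ need not have a sign), and $\eta_2$ on $(r_0,1)$ connects $v_0$ down to $-v_0$; any $r_0\in(1/\sqrt2,1)$ works, its exact value being pinned down later by the transport estimates. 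Choosing $\eta_1,\eta_2$ smooth and monotone makes $\partial_r H$ globally $C^\infty$, hence $H$ smooth.

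The main obstacle is precisely the pair of estimates on $\nabla c$ for \emph{non-radial} $\rho_2$: the lower bound $|\nabla c|\gtrsim\theta/(\sigma r)$ forcing saturation for $r\lesssim R_0$, and the quantitative near-radiality $|\nabla c\cdot\hat n|/|\nabla c|=1-O(r^{-2})$ locating the direction of the saturated drift. Both must hold uniformly over all admissible $\rho_2$ (supported in $\overline{B(0,1)}$, mass in $[\pi\theta,2\pi\theta]$) and over $t\in[0,\tau]$, so they come down to uniform bounds on the singular kernel $\frac{x-y}{|x-y|^2}$ against that class. The delicate window is $R_0\lesssim r\lesssim 1+\gamma/v_0$, where $|\nabla c|$ crosses the saturation threshold $\beta$ and one must check that both the linear bound $-\gamma/(2(r+1))$ and the saturated bound $-v_0$ dominate the prescribed $-\gamma/(4r)$ --- this works because $\gamma/(4R_0)\approx v_0/2$ --- and reconciling it with the smoothness of $H$ is what $\eta_1$ does; tracking constants in this window is what forces the quantitative requirement $\gamma\gg 1$ (e.g.\ $\gamma\ge 16$).
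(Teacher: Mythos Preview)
Your overall strategy matches the paper's: reduce to kernel estimates using only that $\rho_2(\cdot,t)$ is supported in $\overline{B(0,1)}$ with mass in $[\pi\theta,2\pi\theta]$, then verify \eqref{def:H} region by region. For $r>1$ the paper derives the same radial-component bounds $\frac{1}{r+1}\le\frac{(x-y)\cdot\hat n}{|x-y|^2}\le\frac{1}{r-1}$ you use, by analyzing the level sets of $V(x,y;r)=\frac{1}{2\pi}\frac{x-r}{(x-r)^2+y^2}$ (circles through $(r,0)$).

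The main gap is your treatment of $(r_0,1]$ and the role of $r_0$. You assert that ``any $r_0\in(1/\sqrt2,1)$ works'' and will be ``pinned down later by the transport estimates,'' but $r_0$ must be determined \emph{in this lemma}: since $\eta_2$ has to connect smoothly to the value $-v_0$ at $r=1$, one needs the drift to already be saturated and pointing inward for $r$ just below $1$. For $r<1$ the point lies inside $\supp\rho_2$, and none of your kernel bounds even show $\nabla c\cdot\hat n<0$ there, let alone $|\nabla c\cdot\hat n|\ge\beta$. The paper's non-obvious step is an extremal argument: over the class $\mathcal M=\{g\ge 0:\supp g\subset B(0,1),\ \|g\|_\infty\le 2\theta,\ \|g\|_1\ge\pi\theta\}$ the maximizer of $\int gV$ for $\sqrt2-1<r\le 1$ is $\tilde g=2\theta\,\one_\Omega$ with $\Omega=B(0,1)\setminus B\bigl((r-\tfrac{1}{\sqrt2},0),\tfrac{1}{\sqrt2}\bigr)$ (one removes the half of the disk where $V$ is most negative), and an explicit computation gives $\frac{1}{\sigma}\int\tilde gV=-\frac{\gamma}{\chi}\bigl(r-\tfrac{1}{\sqrt2}\bigr)$. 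This is $\le -\beta$ precisely when $r\ge \tfrac{v_0}{\gamma}+\tfrac{1}{\sqrt2}=:r_0$, which is what fixes $r_0$ and explains why it lands in $(1/\sqrt2,1)$. A secondary issue: your near-radiality estimate gives only drift $\le -v_0(1+C/r^2)^{-1/2}>-v_0$ on $(1,R_0-1)$, and the $O(1)$ defect near $r=1^+$ cannot be ``absorbed by $\eta_2$,'' since $\eta_2$ lives on $(r_0,1)$, the other side of $r=1$; the paper sidesteps this direction issue altogether by reducing to the scalar quantity $\Psi(|\partial_x c|)\,\sgn(\partial_x c)$ at $(r,0)$ rather than tracking $|\nabla c|$ and $\nabla c\cdot\hat n$ separately.
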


\begin{figure}[h!]
\centering
\includegraphics[scale=1]{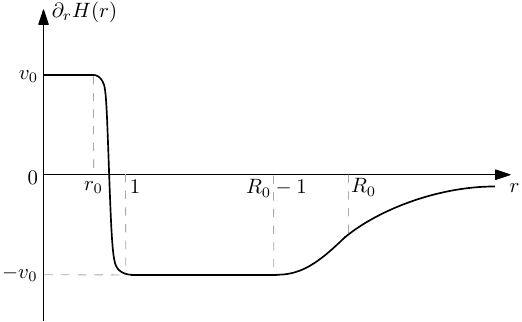}
 \caption{An illustration of $\partial_r H(r)$.}
 \label{fig_H_der}
\end{figure}

With the above definition of $\partial_r H(r)$, integrating in $r$, we can define $H(r)$ as
\be\label{def:H_2}
H(r) = \left\{ \begin{array}{ll}
 -\frac{\gamma}{4}\log r, & r > R_0 \\
 -\int_r^{R_0} \eta_1(s)\,ds - \frac{\gamma }{4}\log R_0, & R_0 \geq r > R_0 -1 \\
 -v_0r + v_0(R_0-1) -\int_{R_0-1}^{R_0} \eta_1(s)\,ds - \frac{\gamma }{4}\log R_0, & R_0-1 \geq r > 1 \\
 -\int_r^1 \eta_2(s)\,ds + v_0(R_0-2)  -\int_{R_0-1}^{R_0} \eta_1(s)\,ds - \frac{\gamma }{4}\log R_0, & 1 \geq r \geq r_0 \\
  -\int_{r_0}^1 \eta_2(s)\,ds + v_0(r-r_0+R_0-2)  -\int_{R_0-1}^{R_0} \eta_1(s)\,ds - \frac{\gamma }{4}\log R_0, & r_0 \geq r \geq 0.
\end{array}
\right.
\ee

We note that, as will follow from the proof, the form of $H$ can be made simpler if we take it to be just Lipschitz regular. But dealing with the Fokker-Planck equation with rough drift
makes the argument more technical, and we prefer to introduce more complex definition that has the advantage of leading to $H$ that is smooth away from zero and with simple Lipschitz behavior at the origin.

\begin{proof}
Let us consider the following family of functions
\begin{equation}\label{g_class}
    \mathcal{M} = \Big\{ g\geq 0: \text{supp }g \subset B(0,1),\,\,\, \|g\|_\infty \leq 2\theta,\,\,\,2\pi \theta \geq \|g\|_1 \geq \pi\theta\Big\}.
\end{equation}
Because we assume that $\|\rho_2(\cdot, t)\|_1 \geq \frac{1}{2} \|\rho_2(\cdot, 0)\|_1 \geq \pi\theta$ for $t\in[0,\tau]$, and 
since $\rho_2(x,t)\leq\rho_2(x,0) $ for all $x$, we have that $\rho_2(\cdot, t) \in \mathcal{M}$ for all $t\in[0,\tau]$.
Furthermore, notice that the family $\mathcal{M}$ has rotational invariance (i.e. $g\in \mathcal{M} \Leftrightarrow Tg \in \mathcal{M}$, where $T$ is a rotation). Hence to find an $H$ that satisfies \eqref{def:H}, it suffices to find an $ H$ such that
\begin{align}\label{radial_comp_2d}
    \d_r H(r) \geq  \sup_{g\in \mathcal{M}}  \Psi \left(\left|\frac{1}{\sigma}\frac{\d}{\d x}  ((-\Delta)^{-1} g)(r,0)\right|\right)\frac{\frac{\d}{\d x} ((-\Delta)^{-1} g)(r,0)}{|\frac{\d}{\d x}  ((-\Delta)^{-1} g)(r,0)|},\quad\text{ for all }r>0.
\end{align}

For any point $(x,y)\in \mathbb{R}^2$, we define $V(x, y; r)$ to measure how a unit mass concentrated at $(x,y)$ affects the drift in the horizontal direction at the point $(r,0)$, namely
\begin{equation}
    V(x, y; r) :=  \frac{\partial}{\partial x} \big((-\Delta)^{-1} \delta_{(x, y)}\big) (r,0).
\end{equation}
 Then estimating the right side of (\ref{radial_comp_2d}) reduces to estimating the quantity
 \begin{equation}\label{jun201}
   \int_{\mathbb{R}^2} g(x, y) V(x, y; r)~ dxdy, \quad\text{ for all }r>0.
 \end{equation}
In particular, the magnitude of the expression in (\ref{jun201}) determines if the saturation level of the sensitivity function $\Psi$ has been reached.

Let us digress a little and take a closer look at the function $V(x,y; r)$. First, a direct computation gives that
\begin{equation}
    V(x, y; r) = \frac{1}{2\pi}\frac{x-r}{(x-r)^2 + y^2}.
\end{equation}
We fix $r>0$, and consider $V(x,y; r)$ as a function of $x$ and $y$. Let us investigate the level sets of $V(x,y; r)$. Elementary algebraic manipulations yield that the level sets of $V(x,y;r)$ are all circles passing through the point $(r,0)$; more precisely, for any constant $b \ne 0$, the level set $V(x,y;r)=b$ is given by a circle centered at $(r+\frac{1}{4\pi b}, 0)$ with radius $\frac{1}{4\pi |b|}$ since
\begin{equation}
  \frac{x-r}{(x-r)^2 + y^2} = 2\pi b \quad \Leftrightarrow \quad \left(x-r-\frac{1}{4\pi b}\right)^2+y^2 = \left(\frac{1}{4\pi b}\right)^2.
\end{equation}

\textbf{$\bullet$ Case 1:} $r>1$. Note that when $r>1$, the whole disk $B(0,1)$ lies in the set $\{(x,y):V(x,y;r)<0\}$. Based on the structure of the level sets $V(x,y;r)$, a larger radius of the level set circle implies a smaller $|b|$, i.e. a larger $b<0$.
Therefore, among all $(x,y)\in B(0,1)$, the maximum of $V(x,y;r)$ is achieved at the point $(-1,0)$, which corresponds to the value $b=-\frac{1}{2\pi(1+r)}$, and the minimum is achieved at $(1,0)$, which corresponds to the value $b=\frac{1}{2\pi(1-r)}$ .
Thus for any $g\in \mathcal{M}$, we have
\begin{equation}\label{aux126}
 \frac{\gamma}{\chi(1-r)}\leq \frac{\|g\|_1}{2\pi \sigma (1-r)}\leq \frac{1}{\sigma}\int_{\mathbb{R}^2} g(x, y) V(x, y; r)~ dxdy \leq -\frac{\|g\|_1}{2\pi \sigma (1+r)} \leq - \frac{\gamma}{2\chi(1+r)},
\end{equation}
because $\pi\gamma \sigma/\chi = \pi \theta \leq \norm{g}_1\leq 2\pi \theta = 2\pi\gamma \sigma/\chi$ by (\ref{g_class}).

Given our sensitivity function $\Psi$, we know that the right side of \eqref{radial_comp_2d} is bounded below by $-v_0$.
As $-\frac{\gamma}{2\chi(1+r)}$ is increasing for $r>1$, we can find a threshold value $R_0$ such that $\frac{\gamma}{2\chi(1+R_0)} = \frac{v_0}{\chi}$ (i.e. $R_0 = \frac{\gamma}{2v_0}-1$),
so that the magnitude of (\ref{jun201}) is guaranteed to hit the value of sensitivity $\beta = v_0/\chi.$
Then for $r \leq R_0$ we for sure reach the minimal drift $v_0$ towards the origin.
For $r \geq R_0,$ we observe that
\begin{equation}
    \frac{\gamma}{2\chi(1+r)} \geq \frac{\gamma}{4\chi r},
\end{equation}
and, multiplying by the slope $\chi$ of $\Psi,$ we may set
\[    \d_r H(r) = -\frac{\gamma}{4r} \quad\text{ for $r \geq R_0 $} \]
so that (\ref{def:H}) holds in this range.
If $R_0 \geq r >1,$ according to \eqref{aux126}, the definition of $R_0$, and the saturation property of $\Psi$ we can define $\d_r H = -v_0$ to satisfy \eqref{def:H}. To make $H$ smooth, we introduce the connecting function $\eta_1$ that,
according to its properties specified in Lemma~\ref{lem418}, only makes $\partial_r H$ larger.

\textbf{$\bullet$ Case 2 :} $r_0<r\leq 1$ (where $r_0$ such that $1/\sqrt{2}<r_0<1$ will be determined below).
In the regime $\sqrt{2}-1<r\leq 1$, one can construct an explicit $\tilde g\in \mathcal{M}$ to maximize $\int g(x,y) V(x,y;r) dxdy$. We will show that there exists an $r_0>\frac{1}{\sqrt{2}}$ such that
\begin{align}
   \frac{1}{\sigma}  \int g(x,y) V(x,y;r) dxdy\leq -\beta, \quad \text{ for all } g\in\mathcal{M}~\text{and}~ r_0<r\leq 1.
\end{align}
We note that $\tilde g$ must have a mass exactly $\pi\theta$ for the following reason. For any $g\in \mathcal{M}$, due to the restriction $\|g\|_\infty \leq 2\theta$, the mass of $g$ in the set $\{V>0\}\cap B(0,1)$ (which is the set $\{x>r\}\cap B(0,1)$) is strictly less than $\pi\theta$. Since $g$ has to satisfy $\|g\|_1 \geq \pi\theta$, this means the rest of the mass has to be allocated in the set $\{V<0\}$. As a result, the mass of the optimal $\tilde g$ must be as small as possible, which is $\pi\theta$.

To find $\tilde g$ that maximizes $\int gV$ and has mass $\pi\theta$, it suffices to set $\tilde g$ as $2\theta \one_\Omega$, where $\Omega =B(0,1) \cap \{V>b\} $, and $b<0$ is some constant to ensure that $\|\tilde g\|_1 = \pi\theta$. Making use of our previous discussion on the level sets of $V$, we have $\Omega = B(0,1) \cap B\big((r+\frac{1}{4\pi b}, 0), \frac{1}{4\pi |b|}\big)^c$.

In the case $\sqrt{2}-1<r\leq 1$, we can estimate the drift from above if we take $\Omega = B(0,1) \backslash B\big((r-\frac{1}{\sqrt{2}},0), \frac{1}{\sqrt{2}}\big)$. Indeed, we are removing exactly half of the disc's area where $V(x,y,r)$ is smallest. The constraint
$r > \sqrt{2}-1$ comes from the observation that for smaller $r,$ the area of the set $\Omega$ will be larger than half of the disc's area.

As a result, we compute
\begin{equation*}
\begin{split}
\frac{1}{\sigma}\int \tilde{g}(x,y) V(x,y;r) dxdy &= \frac{2\gamma}{\chi} \frac{\partial}{\partial x} \big((-\Delta)^{-1} \one_\Omega \big) (r,0)\\
&=\frac{2\gamma}{\chi} \left[  \frac{\partial}{\partial x} \big((-\Delta)^{-1}  \one_{B(0,1)} \big) (r,0) - \frac{\partial}{\partial x} \big((-\Delta)^{-1}  \one_{B((r-\frac{1}{\sqrt{2}},0), \frac{1}{\sqrt{2}})} \big) (r,0)\right]\\
&=  -\frac{\gamma}{\chi}\left(r- \frac{1}{\sqrt{2}}\right), \quad \quad \text{ for }\sqrt{2} - 1 < r \leq 1.
\end{split}
\end{equation*}
Here we used a simple explicit formula for the derivative of the inverse Laplacian applied to a radial function $f$:
\[ \partial_x (-\Delta)^{-1}f(r,0) = -\frac{1}{r} \int_0^r s f(s)\,ds. \]
Note that for $1/\sqrt{2}<r\leq 1, \,\,\,\sup_{g\in\mathcal{M}}\int g(x,y) V(x,y;r) dxdy<0$. As $\beta= v_0/\chi$, for sufficiently large $\gamma$, we can find $r_0\in (1/\sqrt{2},1)$
(due to a simple computation we may set $r_0 = \frac{v_0}{\gamma}+\frac{1}{\sqrt{2}}\leq \frac45<1$ if $\gamma$ is sufficiently large) such that
\begin{align}
  \frac{1}{\sigma}  \int g(x,y) V(x,y;r) dxdy\leq -\beta, \quad \text{ for all } g\in\mathcal{M}~\text{and}~ r_0<r\leq 1.
\end{align}
Therefore due to the structure of $\Psi$, we could set
\begin{align*}
    \d_r H(r) = -v_0,\quad \text{for}~r\in (r_0,1];
\end{align*}
instead, we use the function $\eta_2$ connecting with the last region. According to the properties prescribed in the statement of Lemma~\ref{lem418}, $\eta_2$ only makes $\partial_r H(r)$ larger.

\textbf{$\bullet$ Case 3:} $0<r\leq r_0$.  In this case, for any $g\in \mathcal{M}$, we could use a crude upper bound of \eqref{radial_comp_2d}'s right side to set
\begin{align*}
    \d_r H(r) = v_0,\quad \text{for}~ 0<r\leq r_0.
\end{align*}

Finally, we obtain \eqref{def:H_2} by integrating all cases in $r$.
\end{proof}

\section{Transport estimates}
In this section, we will prove the key estimates on the transport time it takes for the density $\rho_1$ to arrive
into the region where $\rho_2$ is supported.
Let us consider the equation dual to \eqref{eq:fokker_planck},
\begin{equation} \label{dual216}
\d_t f = \Delta f + \nabla H \cdot \nabla f,
\end{equation}
with $H(x)$ constructed in (\ref{def:H_2}). Recall that $H(x)$ is radially symmetric, with $\partial_r H(r)$ given by \eqref{defH48}. 

We will be generally interested in the solutions of \eqref{dual216} that are smooth away from the origin, non-negative, and quickly decaying at infinity.
The equation \eqref{dual216} possesses maximum principle, so the $L^\infty$ norm of $f$ is non-increasing.
It is also straightforward to check that the evolution \eqref{dual216} preserves the invariant $\int_{\R^2} f(x,t) e^{H(x)}\,dx.$

The following lemma links solutions of \eqref{eq:fokker_planck} and \eqref{dual216}, and its proof is a direct computation using integration by parts (see \cite{kiselev2022chemotaxis}).
\begin{lemma}\label{duality23}
Suppose that $u(x,t)$ and $f(x,t)$ solve \eqref{eq:fokker_planck} and \eqref{dual216} respectively
with the smooth initial data $u_0(x)= u(x,0)$ and $f_0(x) = f(x,0).$ Let $t>0,$ then for every $0 \leq s \leq t$
we have
\[ \frac{d}{ds} \int_{\R^2} f(x,s) u(x, t-s)\,dx =0. \]
In particular, it follows that
\[ \int_{\R^2} f_0(x) u(x,t)\,dx = \int_{\R^2} f(x,t) u_0(x)\,dx. \]
\end{lemma}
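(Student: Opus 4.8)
The plan is to differentiate the quantity $\Phi(s):=\int_{\R^2} f(x,s)\,u(x,t-s)\,dx$ in $s$ and verify that the time derivative vanishes identically, using that $f$ solves the dual (backward-type) equation \eqref{dual216} and $u$ solves the Fokker--Planck equation \eqref{eq:fokker_planck}. First I would note that under the stated regularity and decay assumptions (both functions smooth away from the origin, non-negative, rapidly decaying at infinity, with $H$ smooth away from $0$ and Lipschitz at the origin), the integral $\Phi(s)$ is finite and differentiation under the integral sign is justified for each fixed $t>0$ and $0\le s\le t$. Then by the chain rule,
\[
\frac{d}{ds}\Phi(s)=\int_{\R^2}\big[\partial_s f(x,s)\big]\,u(x,t-s)\,dx-\int_{\R^2} f(x,s)\,\big[\partial_t u\big](x,t-s)\,dx .
\]
Substituting $\partial_s f=\Delta f+\nabla H\cdot\nabla f$ and $\partial_t u=\Delta u-\nabla\cdot(u\nabla H)$ (evaluated at time $t-s$) gives
\[
\frac{d}{ds}\Phi(s)=\int_{\R^2}\big(\Delta f+\nabla H\cdot\nabla f\big)u\,dx-\int_{\R^2} f\big(\Delta u-\nabla\cdot(u\nabla H)\big)\,dx,
\]
where all functions are evaluated at the appropriate times.

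The next step is to integrate by parts. The two Laplacian terms cancel: $\int (\Delta f)\,u\,dx=\int f\,(\Delta u)\,dx$ by Green's identity. For the drift terms, $\int (\nabla H\cdot\nabla f)\,u\,dx=-\int f\,\nabla\cdot(u\nabla H)\,dx$ after one integration by parts, which exactly cancels the remaining term $+\int f\,\nabla\cdot(u\nabla H)\,dx$. Hence $\frac{d}{ds}\Phi(s)\equiv 0$, proving the first assertion. Evaluating the constancy of $\Phi$ at $s=0$ and $s=t$ yields $\int f_0\,u(\cdot,t)\,dx=\Phi(0)=\Phi(t)=\int f(\cdot,t)\,u_0\,dx$, which is the second assertion.

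The main technical obstacle is justifying the integrations by parts and the differentiation under the integral sign, since $H$ is only Lipschitz at the origin and the solutions need only be smooth away from $x=0$. I would handle this by excising a small ball $B(0,\rho)$, performing all computations on $\R^2\setminus B(0,\rho)$ where everything is smooth, and controlling the boundary terms on $\partial B(0,\rho)$: these carry a factor $\rho$ from the length of the circle against bounded integrands (using $|\nabla H|\le v_0$ near the origin and local boundedness of $f$, $u$, $\nabla f$, $\nabla u$), so they vanish as $\rho\to 0$. The decay at infinity kills the boundary terms at large radius. Since this is a standard duality computation already carried out in \cite{kiselev2022chemotaxis}, I would either cite that reference for the details or include only this short excision argument.
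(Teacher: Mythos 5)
Your proposal is correct and follows essentially the same route as the paper, which handles this lemma as a direct computation by differentiating $\int f(x,s)u(x,t-s)\,dx$ in $s$ and integrating by parts (deferring details to \cite{kiselev2022chemotaxis}). Your added excision argument near the origin to justify the integrations by parts with the merely Lipschitz $H$ is a reasonable way to supply those details and raises no issues.
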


In the next two subsections, we will obtain lower bounds for radially symmetric functions $f$ that solve \eqref{dual216}, and then apply Lemma \ref{duality23} to obtain lower bounds of the mass of $u(\cdot,t)$ in some disk.

\subsection{Long range transport}

There are two different regimes in flux-limited chemotactic transport. The first one is from the distance $\sim L$ to the distance $\sim R_0,$ where the chemotactic drift
is saturated. In this subsection, we are concerned with this stage of the transport process.

\begin{thm}\label{trantheor}
Let $f(x,t)$ solve \eqref{dual216} with $H$ given by \eqref{def:H_2}.
Suppose that the radial initial data $f_0 \in C_0^\infty$ satisfies $\one_{B_{2R_0}}(x) \geq f_0(x) \geq \one_{B_{3R_0/2}}(x),$ and $f_0$ is non-increasing in the radial direction.
Then for all $t>0$ we have
 \begin{equation}\label{jun211}
 f(x,t) \geq \frac14 \one_{B_{R_0 + \frac18 \sqrt{4R_0^2 + \gamma t}}}(x).
 \end{equation}


\end{thm}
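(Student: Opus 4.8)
The plan is to build an explicit radial subsolution of the dual equation \eqref{dual216} and conclude by the comparison principle. Two structural facts organize the argument. First, $\phi(t):=R_0+\tfrac18\sqrt{4R_0^2+\gamma t}\ge\tfrac54 R_0>R_0$ for every $t\ge0$, and on $\{|x|\ge R_0\}$ one has $\partial_rH(r)=-\gamma/(4r)$ exactly, so a radial $w$ there solves the \emph{model equation} $\partial_t w=\partial_{rr}w+\tfrac{1-\gamma/4}{r}\partial_r w$; written in the variable $\rho=r^2$ this is $\partial_t w=4\rho\,\partial_{\rho\rho}w+(4-\tfrac\gamma2)\partial_\rho w$, a transport--diffusion whose transport term drives $\rho$ (hence $r^2$) inward at speed $\sim\gamma/2$ — exactly the mechanism behind the $\gamma t$ scaling in \eqref{jun211}. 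Second, $\|\nabla H\|_\infty\le v_0\le1$, so \eqref{dual216} is uniformly parabolic with bounded drift and enjoys a comparison principle, and $f_0\in C_0^\infty$ keeps $f$ bounded and decaying at infinity. Hence it suffices to produce a radial $\underline f$ that is (i) non-increasing in $r$, (ii) a subsolution of \eqref{dual216} on all of $\R^2$, (iii) $\le f_0$ at $t=0$, and (iv) $\ge\tfrac14$ on $B_{\phi(t)}$: then $f\ge\underline f\ge\tfrac14\one_{B_{\phi(t)}}$, where (i)+(iv) is used to pass from the value at $|x|=\phi(t)$ to the whole ball.

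For the building block I would look for a self-similar profile of the model equation of the form $w(x,t)=G\big(|x|^2/(4R_0^2+\gamma t)\big)$. The resulting ODE integrates to $G'(\zeta)\propto\zeta^{\gamma/8-1}e^{-\gamma\zeta/4}$, so $G$ is, up to an affine normalization, the tail $\Pm(\Xi>\zeta)$ of a $\mathrm{Gamma}(\gamma/8,\gamma/4)$ variable $\Xi$; thus $G$ decreases from $1$ to $0$ with the bulk of the transition concentrated near $\zeta=\E\Xi=\tfrac12$ on scale $\sqrt{\Var\Xi}=\sqrt{2/\gamma}$. One checks directly that $w$ is a genuine classical solution of $\partial_t w=\Delta w-\tfrac{\gamma}{4|x|}\partial_r w$ on $\R^2$ — the $1/|x|$ singularity of the drift is harmless because $\gamma>8$ forces $\partial_r w=O(|x|^{\gamma/4-1})$ near $0$. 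I would then normalize and truncate: with $c:=G(9/16)$, set
\[
\underline f:=\frac{(w-c)_+}{1-c}.
\]
Being the positive part of a solution (of an equation with no zeroth-order term), then rescaled, $\underline f$ is a subsolution of the model equation; it is non-increasing in $r$; and at $t=0$ it is supported in $\overline{B_{3R_0/2}}$ with $\underline f(0,0)=1$, so $\underline f(\cdot,0)\le\one_{B_{3R_0/2}}\le f_0$, giving (i) and (iii).

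It remains to upgrade ``subsolution of the model equation'' to ``subsolution of \eqref{dual216}'' and to verify (iv). For the first point, for $r\ge r_*:=\gamma/(4v_0)$ one has $\partial_rH(r)\le-\gamma/(4r)$ — equality for $r\ge R_0$; $\partial_rH=-v_0\le-\gamma/(4r)$ on $[r_*,R_0-1]$; and $\partial_rH=\eta_1\le-\gamma/(4r)$ on $[R_0-1,R_0]$ by Lemma~\ref{lem418}. Since $\partial_r\underline f\le0$, this gives $\partial_rH\,\partial_r\underline f\ge-\tfrac{\gamma}{4r}\partial_r\underline f$, hence $\Delta\underline f+\nabla H\cdot\nabla\underline f\ge\Delta\underline f-\tfrac{\gamma}{4r}\partial_r\underline f\ge\partial_t\underline f$, so $\underline f$ is already a subsolution of \eqref{dual216} on $\{|x|>r_*\}$. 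Inside the small ball $B_{r_*}$ one has $\zeta=|x|^2/(4R_0^2+\gamma t)\le r_*^2/(4R_0^2)<\tfrac1{12}$, so there $\underline f$ sits within an exponentially small amount of its maximum and $\partial_r\underline f$ is exponentially small, while the true drift is only $O(v_0)$; I would therefore modify $\underline f$ only inside $B_{r_*}$ — flattening it toward the origin — so that the modified function is a non-increasing subsolution of \eqref{dual216} on all of $\R^2$, stays close to its maximum on $B_{r_*}$, and is unchanged outside $B_{r_*}$, preserving (i), (iii). Finally, for (iv): if $|x|\le\phi(t)$ then, using $\sqrt{4R_0^2+\gamma t}\ge2R_0$,
\[
\zeta(x,t)\le\frac{\phi(t)^2}{4R_0^2+\gamma t}=\frac{R_0^2}{4R_0^2+\gamma t}+\frac{R_0}{4\sqrt{4R_0^2+\gamma t}}+\frac1{64}\le\frac14+\frac18+\frac1{64}=\frac{25}{64}<\frac12,
\]
so $\underline f(x,t)\ge(1-c)^{-1}\big(G(25/64)-G(9/16)\big)$, and a direct estimate of the Gamma tail (which only improves as $\gamma$ grows) shows this is $\ge\tfrac14$ once $\gamma\ge16$. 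With the comparison principle this yields $f(x,t)\ge\tfrac14$ on $B_{\phi(t)}$.

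The one genuinely delicate step is the local modification inside $B_{r_*}$: the clean self-similar profile solves the model equation with drift exactly $-\gamma/(4r)$, but there the true drift $\nabla H$ neither equals it nor points consistently inward, so $\underline f$ must be altered near the origin while (a) preserving the subsolution inequality for the true drift, (b) not creating a concave corner at the gluing radius (which would break the subsolution property), and (c) leaving the value of $\underline f$ at $|x|=\phi(t)$ intact, since that value is what pins down the constant $\tfrac14$ and the threshold $\gamma\ge16$. Everything else — the radial reduction, monotonicity of $\underline f$, the verification that $w$ solves the model equation and the Gamma-tail arithmetic, and the comparison principle — is routine.
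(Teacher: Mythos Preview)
Your self-similar profile is an elegant choice, and the outer part of the argument (on $\{r\ge r_*\}$) is sound: $G'\propto\zeta^{\gamma/8-1}e^{-\gamma\zeta/4}$, the truncation $(w-c)_+/(1-c)$ is a subsolution of the model equation, and the inequality $\partial_rH\le-\gamma/(4r)$ on $[r_*,\infty)$ transfers this to the true equation. The numerics at $\gamma=16$ also just barely clear the $\tfrac14$ bar.

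The gap is exactly where you say it is, and it is more than a technicality. ``Flattening toward the origin'' does not produce a subsolution: if you set the inner piece equal to $\underline f(r_*,t)$, then inside $B_{r_*}$ this is constant in space but \emph{strictly increasing in $t$} (since $\zeta_*=r_*^2/\tau$ decreases and $G$ is decreasing), hence a strict \emph{super}solution of \eqref{dual216}. At the gluing radius the radial derivative jumps from $0$ to $\partial_r\underline f(r_*,t)<0$, a concave corner, which is the wrong sign for a subsolution in the classical sense. More quantitative fixes (e.g.\ subtracting an $\epsilon(t)$ large enough to absorb the error $(\partial_rH+\gamma/(4r))\,|\partial_r\underline f|$ on $B_{r_*}$) cost an amount that is \emph{not} small for $\gamma$ near the threshold: at $\gamma=16$, $v_0=1$ one computes $\epsilon_\infty\gtrsim 0.5$, which destroys the $\tfrac14$ lower bound. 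So as written the global-subsolution route does not close.

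The paper avoids the inner region entirely by a different device: it first uses the conserved quantity $\int f\,e^H$ together with radial monotonicity of $f$ to pin down a time-uniform boundary value
\[
f(x,t)\big|_{\Sm_{R_0}}\ \ge\ \frac{\int_{B_{3R_0/2}\setminus B_{R_0}}e^H}{\int_{\R^2\setminus B_{R_0}}e^H}\ \ge\ \tfrac12,
\]
and only \emph{then} runs a barrier on the exterior domain $\{r\ge R_0\}$, where $\partial_rH=-\gamma/(4r)$ holds exactly and no modification is needed. The interior bound $f\ge\tfrac12$ on $B_{R_0}$ comes for free from monotonicity of $f$ itself. Your self-similar $w$ could replace the paper's ad hoc stretched profile $\omega_{\varphi(t)}$ on $\{r\ge R_0\}$ with no loss --- but the key missing idea is the invariant-based boundary estimate that makes a purely exterior comparison sufficient.
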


\begin{proof}
The solution $f(x,t)$ remains monotone non-increasing for all times - this is not hard to verify by differentiating \eqref{dual216} in radial
variable and applying comparison principle to the resulting equation (see e.g, \cite{Lieb}).

Let us first obtain a lower bound of $f(x,t)|_{\mathbb{S}_{R_0}}$. Notice that
\begin{eqnarray}
\left. f(x,t) \right|_{\Sm_{R_0}} \int_{\R^2 \setminus B_{R_0}} e^H\,dx \geq \int_{\R^2 \setminus B_{R_0}} f e^H \,dx
= \int_{\R^2} f_0 e^H \,dx - \int_{B_{R_0}} fe^H \,dx \geq \nonumber \\ \label{aux216a}
\int_{\R^2 \setminus B_{R_0}} f_0 e^H\,dx \geq \int_{\R^2 \setminus B_{R_0}} \one_{B_{3R_0/2}} e^H \,dx = \int_{B_{3R_0/2} \setminus B_{R_0}} e^H \,dx.
\end{eqnarray}

Here $\Sm_{R_0}$ is the sphere with radius $R_0$.
In the first step we used monotonicity of $f$ in the radial variable. In the second step, we use the conservation of
$\int f(x,t) e^{H(x)}\,dx$. And in the third step, we have \[ \int_{B_{R_0}} f_0 e^H \,dx  \geq \int_{B_{R_0}} f e^H\,dx \]
due to $ f_0(x) = 1 \geq f(x,t)$ in $B_{R_0}$; where we used that $f(x,t)\leq 1$ for all $x\in\mathbb{R}^2$ and $t\geq 0$ by the maximum principle. The fourth step follows from the choice of $f_0.$
Because $ H(|x|) = -\frac{\gamma}{4}\log |x|$ for $|x| \geq R_0$, we have
\begin{equation*}
    \begin{aligned}
        &\int_{\R^2 \setminus B_{R_0}} e^H\,dx  = 2\pi \int_{R_0}^{\infty} \frac{1}{r^{\frac{\gamma}{4}-1}}dr=\frac{2\pi R_0^{2-\frac{\gamma}{4}}}{\frac{\gamma}{4}-2},\\
        &\int_{B_{3R_0/2} \setminus B_{R_0}} e^H\,dx  = 2\pi \int_{R_0}^{3R_0/2} \frac{1}{r^{\frac{\gamma}{4}-1}}dr = \frac{2\pi \left(R_0^{2-\frac{\gamma}{4}}-(3R_0/2)^{2-\frac{\gamma}{4}}\right)}{\frac{\gamma}{4}-2}=\frac{2\pi R_0^{2-\frac{\gamma}{4}}(1-(3/2)^{2-\frac{\gamma}{4}})}{\frac{\gamma}{4}-2}.
    \end{aligned}
\end{equation*}
Therefore
\begin{equation}\label{flb}
    f(x,t)|_{\Sm_{R_0}} \geq 1-(3/2)^{2-\frac{\gamma}{4}}> \frac12 
\end{equation}
if $\gamma$ is large enough, so \eqref{jun211} is satisfied in $B_{R_0}$  for all $t\geq 0$ by monotonicity of $f$ in radial variable.

Next we aim to prove \eqref{jun211} in $B_{R_0}^c$, and this will be done by constructing an explicit subsolution to \eqref{dual216}. Consider a smooth, convex function $\omega(r)$ defined on $[R_0, \infty)$ such that $\omega(R_0)=1/2,$ $\omega(r)\geq 1/4$ for $r \in [R_0,5R_0/4],$ and $\omega(r)$ vanishes if $r \geq 3R_0/2.$
Note that these assumptions imply that $\omega'(r) < 0$ for $r \in [R_0,3R_0/2].$
Let us define $\omega_{\varphi(t)}(r) = \omega(R_0 +\varphi(t)(r-R_0)).$ Here $\varphi(t)$ will be a decreasing function satisfying $\varphi(0)=1$ and $\varphi(t) \geq 0,$ that will be designed so that $\omega_{\varphi(t)}(r)$ can serve as a barrier for $f(x,t).$

Note that
\begin{equation}\label{aux47a} \partial_t \omega_{\varphi(t)}(r) = \omega'(R_0+\varphi(t)(r-R_0)) (r-R_0)\varphi'(t), \end{equation}
 while
\begin{align}
(\Delta + \nabla H \cdot \nabla )\omega_{\varphi(t)}(r) = \partial_r^2 \omega_{\varphi(t)}(r) + \frac{1}{r}\partial_r\omega_{\varphi(t)}(r) + \partial_r H(r) \partial_r \omega_{\varphi(t)}(r) \nonumber \\
\geq \frac{\gamma -4}{4r}|\partial_r  \omega_{\varphi(t)}(r)| \geq \frac{\gamma}{8r} |\omega'(R_0+\varphi(t)(r-R_0))| \varphi(t). \label{aux47b}
\end{align}
Here we used convexity of $\omega,$ \eqref{defH48}, and the fact that $\gamma$ is sufficiently large.

Due to assumptions on $\omega(r)$ and $f_0(x),$ and \eqref{flb}, we will have $f(x,t) \geq \omega_{\varphi(t)}(r)$ for all $x \in [R_0,\infty)$ and $t \geq 0$ provided that
\[   \partial_t \omega_{\varphi(t)}(r) \leq (\Delta + \nabla H \cdot \nabla )\omega_{\varphi(t)}(r) \]
in this region.
This translates into the inequality
\[ -\varphi'(t)(r-R_0) \leq \frac{\gamma}{8r} \varphi(t) \]
that needs to hold for all $r \geq R_0$ and $t \geq 0$ such that $R_0+\varphi(t)(r-R_0) \in [R_0, 3R_0/2]$ (since otherwise the factor $\omega'(R_0+\varphi(t)(r-R_0))$ present in \eqref{aux47a}, \eqref{aux47b} vanishes).
Hence at every time $t \geq 0$ we should have
\[ -\frac{\varphi'(t) (r-R_0)r}{\varphi(t)} \leq \frac{\gamma}{8} \]
for all $r \in [R_0, R_0 + \frac{R_0}{2\varphi(t)}].$ The inequality is most restrictive when $r =R_0+ \frac{R_0}{2\varphi(t)},$
leading to condition
\[ -\varphi'(t) R_0^2 \left(\frac{1}{2\varphi(t)^2} + \frac{1}{4\varphi(t)^3} \right) \leq \frac{\gamma}{8}. \]
Since $\varphi(t) \leq 1$ and $\varphi'(t) \leq 0,$ this inequality is satisfied if
\[ -\frac{\varphi'(t)}{\varphi(t)^3} \leq \frac{\gamma}{8R_0^2}. \]
Integrating in time and using $\varphi(0)=1$, we get
\[ \frac{1}{\varphi(t)^2} \leq 1 + \frac{\gamma t}{4 R_0^2} \]
and finally
\begin{equation}\label{philb}
\varphi(t) \geq \frac{2R_0}{\sqrt{4R_0^2 + \gamma t}}.
\end{equation}
Due to the assumptions on $\omega,$ we have that
\[\omega_{\varphi(t)}(r) = \omega(R_0 +\varphi(t)(r-R_0)) \geq \frac14 \,\,\,\,{\rm  if}\,\,\,\, R_0 + \varphi(t)(r-R_0) \leq \frac{5R_0}{4}.\]
Given \eqref{philb}, this yields $r \leq R_0 + \frac{1}{8}\sqrt{4R_0^2+\gamma t}.$ As we discussed, $f(x,t) \geq \omega_{\varphi(t)}(|x|);$ hence we arrive at \eqref{jun211}.
\end{proof}

Now we can use the result of Theorem~\ref{trantheor} and the duality Lemma~\ref{duality23} to arrive at the following
\begin{cor}\label{longrangecor}
Let $u(x,t)$ solve \eqref{eq:fokker_planck} with a potential $H$ given by \eqref{def:H_2}.
Suppose that the initial data $u_0\geq 0$ satisfies $\int_{0\leq |x| \leq L} u_0(x)\,dx \geq  M_0$, with $L \geq R_0$.
Then for all $t\geq \frac{64(L-R_0)^2}{\gamma}$, we have
\begin{equation}\label{aux217a}
\int_{B_{2R_0}} u(x,t) \, dx \geq  \frac{M_0}{4}.
\end{equation}
\end{cor}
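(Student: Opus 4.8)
The plan is to combine the transport estimate of Theorem~\ref{trantheor} with the duality identity of Lemma~\ref{duality23}. First I would fix a radial test function $f_0 \in C_0^\infty(\R^2)$, non-increasing in $|x|$, with $\one_{B_{3R_0/2}}(x) \leq f_0(x) \leq \one_{B_{2R_0}}(x)$; such an $f_0$ clearly exists and satisfies exactly the hypotheses of Theorem~\ref{trantheor}. Let $f(x,t)$ solve the dual equation \eqref{dual216} with initial data $f_0$. Then Theorem~\ref{trantheor} gives $f(x,t) \geq \tfrac14 \one_{B_{\rho(t)}}(x)$ for all $t>0$, where $\rho(t) := R_0 + \tfrac18\sqrt{4R_0^2 + \gamma t}$.

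Next I would invoke Lemma~\ref{duality23} with this $f$ and with $u$ the solution of \eqref{eq:fokker_planck} from initial data $u_0$, obtaining $\int_{\R^2} f_0(x)\, u(x,t)\,dx = \int_{\R^2} f(x,t)\, u_0(x)\,dx$. Since $u_0 \geq 0$, the maximum principle for \eqref{eq:fokker_planck} keeps $u(\cdot,t)\geq 0$, so from $f_0 \leq \one_{B_{2R_0}}$ the left-hand side is bounded above by $\int_{B_{2R_0}} u(x,t)\,dx$. On the other side, from $f(x,t)\geq \tfrac14\one_{B_{\rho(t)}}$ and $u_0\geq 0$, the right-hand side is at least $\tfrac14 \int_{B_{\rho(t)}} u_0(x)\,dx$. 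Chaining these,
\[
\int_{B_{2R_0}} u(x,t)\,dx \;\geq\; \frac14 \int_{B_{\rho(t)}} u_0(x)\,dx .
\]

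Finally, a direct computation shows $\rho(t) \geq L$ precisely when $4R_0^2 + \gamma t \geq 64(L-R_0)^2$, which certainly holds once $t \geq 64(L-R_0)^2/\gamma$. For such $t$ we have $B_L \subset B_{\rho(t)}$, so the mass hypothesis $\int_{0\leq |x|\leq L} u_0 \geq M_0$ gives $\int_{B_{\rho(t)}} u_0 \geq M_0$, and hence $\int_{B_{2R_0}} u(x,t)\,dx \geq M_0/4$, which is \eqref{aux217a}.

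The one technical point worth noting is that Lemma~\ref{duality23} is stated for smooth initial data, whereas the corollary only assumes $u_0 \geq 0$ with a mass lower bound on $B_L$; I would dispatch this by approximating $u_0$ from below by smooth, compactly supported nonnegative functions whose mass on $B_L$ still exceeds $M_0 - \delta$, applying the argument above to each approximant, and letting $\delta \to 0$ in the resulting inequality. This is routine and I do not expect any genuine obstacle — the corollary is essentially a packaging of Theorem~\ref{trantheor} through duality.
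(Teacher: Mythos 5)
Your proposal is correct and follows essentially the same route as the paper: the same choice of radial test data $f_0$ with $\one_{B_{3R_0/2}} \leq f_0 \leq \one_{B_{2R_0}}$, Theorem~\ref{trantheor} to get $f(x,t) \geq \tfrac14 \one_{B_L}$ once $t \geq 64(L-R_0)^2/\gamma$, and Lemma~\ref{duality23} to transfer this to the mass of $u$ in $B_{2R_0}$. The paper states the final step as "a simple computation," which you carry out explicitly, and your closing remark on approximating non-smooth $u_0$ is a harmless extra precaution.
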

\begin{remark}
In fact, the estimate for $t$ can be improved, but only by $\sim R_0^2/\gamma;$ this improved would anyway by absorbed by the time necessary to pass through the next stage of transport process.
\end{remark}
\begin{proof}
We choose a radial initial data $f_0 \in C_0^\infty$ as in the previous theorem: $f_0$ is non-increasing in the radial direction and satisfies  $\one_{B_{2R_0}}(x) \geq f_0(x) \geq \one_{B_{3R_0/2}}(x)$.
Then
\[  \int_{B_{2R_0}} u(x,t) \, dx \geq  \int_{\R^2} f_0(x) u(x,t)\,dx  = \int_{\R^2} f(x,t) u_0(x)\,dx\]
by the duality.
A simple computation using \eqref{jun211} shows that if $t\geq \frac{64(L-R_0)^2}{\gamma}$, then $f(x,t) \geq \frac14 \one_{B_L}(x).$
This completes the proof.

\end{proof}



\subsection{Transport within $|x| \sim R_0$ region}\label{R0transport}

The second stage of chemotactic transport is from the distance $\sim R_0$ to the distance $\sim v_0^{-1}.$ At the latter distance the strength of the drift and diffusion balance,
and much of $\rho_1$ mass will remain at this distance from the origin. Indeed, if we consider the stationary Fokker-Planck operator
$Af =-\Delta f+ \nabla \cdot (f \nabla H)$ with potential $H = -v_0 |x|,$ it is not difficult to verify that it has a ground state $e^H = e^{-v_0r}$ that is
concentrated exactly on the scale $\sim v_0^{-1}:$ balls with radii of smaller order will only contain a correspondingly small fraction of the total mass of the ground state.

Recall that $M_u(r,t)= \int_{B(0,r)} u(x,t) dx$, and it satisfies
\begin{equation}\label{eqn:dec81}
\begin{aligned}
\partial_t M_u &= \partial_{rr} M_u - \frac{1}{r} \partial_r M_u - \partial_r M_u ~\partial_r H\\
& = \partial_{rr} M_u - \Big(\frac{1}{r} +\partial_r H\Big)\partial_r M_u
\end{aligned}
\end{equation}
We proved in the previous subsection that when $t\geq \frac{64(L-R_0)^2}{\gamma}$, we have $M_u(2R_0, t)\geq \frac{M_0}{4}$.
What we will do next is investigate the mass transport for $t\geq t_0=\frac{64(L-R_0)^2}{\gamma}$, by restarting the process under the assumption
\begin{equation}
    \int_{B_{2R_0}} u(x,t_0) dx \geq \frac{M_0}{4}.
\end{equation}
\begin{thm}\label{thm:mass_v0}
Let $u(x,t)$ satisfy \eqref{eq:fokker_planck} with $H$ given by \eqref{def:H_2}.
    Suppose that at a given time $t_0$,
    \begin{equation}\label{MassumpR0}
    \int_{B_{2R_0}} u(x,t_0) dx \geq c_0 M_0,
\end{equation}
with $R_0 = \farc{\gamma}{2v_0}-1$. Then there exists a universal constant $C_1>0$ such that
\begin{equation}
    \int_{B_{5/v_0}} u(x,t) dx \geq \frac{c_0 M_0}{5},\quad \text{for}~~t\geq t_0 + \frac{C_1\gamma}{v_0^2}.
\end{equation}
\end{thm}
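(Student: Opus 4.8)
The plan is to run, in the constant-drift regime, the same two-part scheme used for Theorem~\ref{trantheor}: a ``reservoir'' lower bound coming from the conservation law, together with a convex barrier that stretches outward. First I would pass to the dual equation \eqref{dual216}. Fix a radial, non-increasing $f_0\in C_0^\infty$ with $\one_{B_{9/(2v_0)}}(x)\le f_0(x)\le \one_{B_{5/v_0}}(x)$ and let $f$ solve \eqref{dual216} with $f(\cdot,0)=f_0$; recall that \eqref{dual216} propagates radial monotonicity and obeys $0\le f\le 1$. Since $s\mapsto u(\cdot,t_0+s)$ solves \eqref{eq:fokker_planck} with datum $u(\cdot,t_0)$, Lemma~\ref{duality23} gives $\int_{\R^2} f_0\,u(\cdot,t_0+s)\,dx=\int_{\R^2} f(\cdot,s)\,u(\cdot,t_0)\,dx$. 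Using $f_0\le\one_{B_{5/v_0}}$, $u\ge0$, and \eqref{MassumpR0}, this yields $\int_{B_{5/v_0}}u(\cdot,t_0+s)\,dx\ge c_0M_0\,\inf_{B_{2R_0}}f(\cdot,s)$, so it suffices to prove $\inf_{B_{2R_0}}f(x,s)\ge\frac15$ for all $s\ge C_1\gamma/v_0^2$.

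Second, the reservoir. Set $\rho_*:=3/v_0$, so $1<\rho_*<R_0-1$. A direct computation from the explicit form \eqref{def:H_2} of $H$ (using $e^{H}\propto e^{-v_0r}$ on $(1,R_0-1)$, and that the contributions of $B_1$, of the $\eta_1$-buffer, and of $\{r>R_0\}$ are lower order in $v_0$ and in $e^{-\gamma}$) shows that $\int_{B_{5/v_0}\setminus B_{\rho_*}}e^{H}\,dx\ge\frac12\int_{\R^2\setminus B_{\rho_*}}e^{H}\,dx$ once $\gamma$ is large. Then from conservation of $\int f\,e^{H}$, monotonicity of $f$ in $r$, and $\|f\|_\infty\le1$ one gets, exactly as in \eqref{aux216a}, that $f(x,s)\big|_{\Sm_{\rho_*}}\int_{\R^2\setminus B_{\rho_*}}e^{H}\,dx\ge\int f_0e^{H}-\int_{B_{\rho_*}}e^{H}\ge\int_{B_{9/(2v_0)}\setminus B_{\rho_*}}e^{H}$, hence $f(\cdot,s)\ge\frac12$ on $B_{\rho_*}$ for all $s\ge0$.

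Third, and this is the core, the stretching barrier on $\{r\ge\rho_*\}$. Let $\omega$ be the (convex, non-increasing) profile on $[\rho_*,\infty)$ equal to $\frac12$ at $\rho_*$, decaying \emph{linearly} to $0$ on $[\rho_*,\rho_*+D]$ with $D:=1/v_0$, and vanishing beyond; note $\rho_*+D=4/v_0$ lies where $f_0\equiv1$. For a decreasing $\varphi$ with $\varphi(0)=1$ put $\phi(r,t):=\omega\big(\rho_*+\varphi(t)(r-\rho_*)\big)$, so $\phi(\cdot,0)=\omega\le f_0$ and $\phi(\rho_*,t)=\frac12\le f(\rho_*,t)$. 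Writing $\zeta=\rho_*+\varphi(r-\rho_*)$ and discarding the nonnegative $\varphi^2\omega''(\zeta)$, the subsolution inequality for \eqref{dual216} reduces, after dividing by $\omega'(\zeta)\le0$, to $\frac{\varphi'}{\varphi}(r-\rho_*)\ge\frac1r+\partial_rH(r)$ on $\rho_*\le r\le\rho_*+D/\varphi(t)$. The key observation is that on this whole range $\frac1r+\partial_rH\le-c_*v_0$ for a universal $c_*>0$: $\partial_rH=-v_0$ and $\frac1r\le v_0/3$ on $(\rho_*,R_0-1)$; $\partial_rH=\eta_1\le-\frac{\gamma}{4r}\le-\frac{v_0}{2}$ on the buffer; and $\frac1r+\partial_rH=\frac{4-\gamma}{4r}\le-c_*v_0$ on $\{r>R_0\}$ provided $r$ stays below a universal multiple of $c_*^{-1}R_0$. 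It therefore suffices to require $\frac{\varphi'}{\varphi}(r-\rho_*)\ge-c_*v_0$ throughout, and since the left side is least at $r=\rho_*+D/\varphi$ this is the single ODE $-\varphi'/\varphi^2\le c_*v_0^2$, $\varphi(0)=1$, solved by $\varphi(t)=\frac1{1+c_*v_0^2t}$. With this choice $\phi(\cdot,t)\ge\frac15$ on $B_{\rho_*+\frac{3}{5}D(1+c_*v_0^2t)}$, and $\rho_*+\frac35 D(1+c_*v_0^2t)\ge 2R_0$ once $t\ge C_1\gamma/v_0^2$; one checks simultaneously that $\rho_*+D/\varphi(t)$ stays below the universal multiple of $c_*^{-1}R_0$ on $[0,C_1\gamma/v_0^2]$, which pins $c_*$ small and $C_1\sim c_*^{-1}$, both universal. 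The comparison principle on $\{r\ge\rho_*\}$ (with the two parabolic-boundary inequalities above) gives $f\ge\phi$ there, and together with $f\ge\frac12$ on $B_{\rho_*}$ this yields $\inf_{B_{2R_0}}f(\cdot,s)\ge\frac15$ for $s\ge C_1\gamma/v_0^2$, which is what we needed.

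The main obstacle is the barrier step. Convexity of $\omega$ (forced by the subsolution requirement together with the need to match the reservoir value at $r=\rho_*$) makes the profile ``front-loaded'', so one must balance the admissible stretching rate $-\varphi'/\varphi^2\lesssim v_0^2$ against the demand that the $\frac15$-level set reach $2R_0$ in time $\sim\gamma/v_0^2$ --- this is where $c_*$ and $C_1$ get fixed --- while also ensuring the barrier's support never drifts so far ($r\gg R_0$) that the chemotactic pull $\frac{\gamma}{4r}$ has decayed past the threshold keeping $\frac1r+\partial_rH\le-c_*v_0$. A minor technical point is that the optimal $\omega$ is only piecewise linear, so one should either smooth it or invoke comparison for subsolutions with a convex kink.
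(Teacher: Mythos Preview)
Your proposal follows essentially the same two-part scheme as the paper: pass to the dual equation \eqref{dual216}, establish a ``reservoir'' lower bound at an inner radius via conservation of $\int fe^H$ and monotonicity, then push a stretching convex barrier outward. The numerical choices differ slightly (you take $\rho_*=3/v_0$ where the paper uses $d_0=2/v_0$; your $\omega$ is piecewise linear where the paper's is smooth), but the architecture is identical.

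There is, however, one gap. Your barrier $\phi$ is a subsolution only while its support $[\rho_*,\rho_*+D/\varphi(t)]$ stays in the region where $\tfrac1r+\partial_rH\le -c_*v_0$, which, as you note, holds only for $r$ below a universal multiple of $c_*^{-1}R_0$. You verify this constraint on $[0,C_1\gamma/v_0^2]$, but the theorem requires $\inf_{B_{2R_0}}f(\cdot,s)\ge\tfrac15$ for \emph{all} $s\ge C_1\gamma/v_0^2$. Since your $\varphi(t)=(1+c_*v_0^2t)^{-1}$ keeps decreasing, the support edge $\rho_*+D/\varphi(t)$ grows without bound, and for large $t$ the subsolution inequality may fail; so comparison only yields $f\ge\phi$ on a finite time interval. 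The paper closes this by \emph{freezing}: once the support reaches the target (there $4R_0$), set $\varphi(t)\equiv\varphi(t_*)$ for $t\ge t_*$. With $\varphi$ constant one has $\partial_t\phi=0$, while the right side $\omega''\varphi^2+(\tfrac1r+\partial_rH)\omega'\varphi$ is nonnegative on the now-fixed support because $\omega''\ge0$, $\omega'\le0$, and $\tfrac1r+\partial_rH<0$ throughout (for $r>R_0$ this is just $(4-\gamma)/(4r)<0$). Hence $\phi$ remains a subsolution for all $t\ge t_*$, and the bound $f\ge\tfrac15$ on $B_{2R_0}$ persists. This one-line patch is all that is missing from your argument.
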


\begin{proof}
Let $d_0 = \frac{2}{v_0}$ and $d_1 = \frac{4}{v_0}$. The argument is similar to the previous subsection and proceeds by looking at the dual equation (\ref{dual216}).
Let the initial data be a radial function $f_0\in C_0^{\infty}(B_{5/v_0})$ that is non-increasing in the radial direction, and  satisfies $1 \geq f_0(x)\geq \one_{B_{d_1}}(x)$.

Then we have  the following from a similar argument as \eqref{aux216a}: 
\begin{eqnarray}
\left. f(x,t) \right|_{\Sm_{d_0}} \int_{\R^2 \setminus B_{d_0}} e^H\,dx \geq \int_{\R^2 \setminus B_{d_0}} f e^H \,dx
= \int_{\R^2} f_0 e^H \,dx - \int_{B_{d_0}} fe^H \,dx \geq \nonumber \\ \label{aux216b}
\int_{\R^2 \setminus B_{d_0}} f_0 e^H\,dx \geq \int_{\R^2 \setminus B_{d_0}} \one_{B_{d_1}} e^H = \int_{B_{d_1} \setminus B_{d_0}} e^H \,dx.
\end{eqnarray}
Since we assume that $v_0 \leq 1,$ we have $d_0 > 1$.  Observe that $d_1/d_0=2$, and set $K_0:=\inf\{k\in \mathbb{N}:2^{k}d_0\geq R_0-1\}$.
Using the definition of $H$ in (\ref{def:H_2}), we estimate

\begin{equation}\label{aux48a}
    \begin{aligned}
        \int_{\R^2 \setminus B_{d_0}} e^H\,dx &=  \int_{\R^2 \setminus B_{R_0}} e^H\,dx+ \int_{B_{R_0}\setminus B_{d_0}} e^H\,dx\\ 
     &    = 2\pi \int_{R_0}^{\infty} \frac{1}{r^{\frac{\gamma}{4}-1}}dr 
         +  2\pi \int_{R_0-1}^{R_0} e^{-\int_r^{R_0} \eta_1(s)\,ds -\frac{\gamma \log R_0}{4}}r\,dr\\ & \quad + 2\pi \int_{d_0}^{R_0-1} e^{-v_0 r+v_0(R_0-1)-\int_{R_0-1}^{R_0} \eta_1(s)\,ds - \frac{\gamma \log R_0}{4}} r dr \\
         &\leq \frac{2\pi R_0^{2-\frac{\gamma}{4}}}{\frac{\gamma}{4}-2} + 2\pi R_0^{1-\frac{\gamma}{4}} e^{v_0} + 2\pi \sum_{k=0}^{K_0} \int_{2^k d_0}^{2^{k+1}d_0}e^{-v_0 r+v_0(R_0-1)-\int_{R_0-1}^{R_0} \eta_1(s)\,ds - \frac{\gamma \log R_0}{4}} r dr  \\
       & \leq  \frac{2\pi R_0^{2-\frac{\gamma}{4}}}{\frac{\gamma}{4}-2} + 2\pi R_0^{1-\frac{\gamma}{4}} e^{v_0} + 2\pi \sum_{k=0}^{K_0} e^{(1-2^k)v_0d_0} 2^{2k}\int_{d_0}^{d_1}e^{-v_0 r+v_0(R_0-1)-\int_{R_0-1}^{R_0} \eta_1(s)\,ds
         - \frac{\gamma \log R_0}{4}} r dr \\
         & \leq  \frac{2\pi R_0^{2-\frac{\gamma}{4}}}{\frac{\gamma}{4}-2} +  2\pi R_0^{1-\frac{\gamma}{4}} e^{v_0} + C_2 \int_{B_{d_1} \setminus B_{d_0}} e^H \,dx,
    \end{aligned}
\end{equation}
where $C_2$ is a universal constant that, since $v_0d_0=2,$ is equal to $\sum_{k=0}^\infty 2^{2k} e^{2(1-2^k)};$ it is not hard to estimate that $C_2 \leq 1.75.$
On the other hand,
\begin{equation}
\begin{split}
\int_{B_{d_1} \setminus B_{d_0}} e^H \,dx &= 2\pi \int_{d_0}^{d_1}e^{-v_0 r+v_0(R_0-1) -\int_{R_0-1}^{R_0} \eta_1(s)\,ds-\frac{\gamma \log R_0}{4}} r dr \\ & \geq 2\pi \int_{d_0}^{2d_0} e^{-v_0 r} rdr e^{v_0 (R_0-\frac12)} R_0^{-\frac{\gamma}{4}} \\
& \geq
\frac{c_1}{v_0^2}  e^{v_0 R_0} R_0^{-\frac{\gamma}{4}} = \frac{c_1}{v_0^2}  e^{-v_0 +\frac{\gamma}{2}} R_0^{-\frac{\gamma}{4}}, \label{aux48b}
\end{split}
\end{equation}
where $c_1>0$ is a universal constant. Here we used the identity $v_0 d_0 =2$, the fact that $-\frac{v_0}{2} \geq \eta_1(r),$ and the definition $R_0 = \frac{\gamma}{2v_0} -1$. From \eqref{aux48a}, \eqref{aux48b} and our assumption that $v_0 \leq 1$
it is clear that for sufficiently large $\gamma,$ we have
\begin{equation}\label{aux48c}
\left. f(x,t) \right|_{\Sm_{d_0}} \geq  \int_{B_{d_1} \setminus B_{d_0}} e^H \,dx \left( \int_{\R^2 \setminus B_{d_0}} e^H\,dx \right)^{-1} \geq \frac12.
\end{equation}

Now fix a convex smooth function $\omega$ on $[d_0,\infty)$ such that $\omega(d_0)=\frac12$, $\frac12 \geq \omega(r)>0$ for $r \in [d_0,d_1),$ $\omega(r) \geq \frac15$ if $r \in [d_0,\frac32 d_0],$
and $\omega(r)=0$ if $r \geq d_1.$ Observe that our assumptions imply that $\omega(r)$ is monotone decreasing in $r.$
Define $\omega_\varphi(r) = \omega (d_0 + \varphi(t)(r-d_0))$ for $r \geq d_0$ and $t \geq t_0.$ 
The function $\varphi(t)$ satisfies $1 \geq \varphi(t) \geq 0,$ $\varphi(t_0)=1,$ and is monotone decreasing.
Note that, similarly to the previous subsection,
\begin{align}\nonumber
(\Delta + \nabla H \cdot \nabla) \omega_\varphi(r) & = \partial_r^2 \omega_\varphi (r) + \frac1r  \partial_r \omega_\varphi(r) + \partial_r H(r) \partial_r \omega_\varphi(r)
\geq
\left(|\partial_r H(r)| - \frac{1}{r}\right)| \omega'(d_0+\varphi(t)(r-d_0))|\varphi(t) \\ & \geq \frac{v_0}{16}| \omega'(d_0+\varphi(t)(r-d_0))|\varphi(t),\quad \text{if}~~d_0<r\leq 4R_0, \label{aux48d}
\end{align}
provided that $\gamma$ is sufficiently large. Here we used \eqref{defH48} and the fact that $d_0>1$ in estimating $\partial_r H.$ Indeed, in the $R_0-1 \geq r \geq d_0$ range
\[ \left||\partial_r H(r)| - \frac1r \right| \geq v_0 - \frac{1}{d_0} = \frac{v_0}{2}.\]
In the $4R_0 \geq r \geq R_0-1$ range, we have
\[ \left||\partial_r H(r)| - \frac1r \right| \geq \frac{\gamma-4}{4r} \geq \frac{v_0(\gamma - 4)}{8\gamma} \geq \frac{v_0}{16} \]
if $\gamma$ is large enough.
In addition,
\begin{align}\label{aux48e}
\partial_t  \omega_\varphi(r) = (r-d_0) \omega'(d_0+\varphi(t)(r-d_0))\varphi'(t)
\end{align}
and $\partial_t \omega_{\varphi(t)} = (\Delta + \nabla H \cdot \nabla) \omega_{\varphi(t)}=0$ if $d_0 +\varphi(t)(r-d_0) \geq d_1$.

We will choose a decreasing $\varphi(t)$ so that $\omega_{\varphi(t)}(r)$ is a subsolution of the equation \eqref{dual216}, satisfying
\[ \partial_t \omega_{\varphi(t)} \leq (\Delta +\nabla H \cdot \nabla) \omega_{\varphi(t)}. \]
Also, the function $\varphi(t)$ will be chosen so that $d_0 +\varphi(t)(4R_0 -d_0) \geq d_1$ for all $t \geq t_0.$
This will ensure that we can use the bound \eqref{aux48d} for all $t \geq t_0:$ it holds for $r \leq 4R_0,$ while all larger $r> 4R_0$ lie in the range
where the subsolution $\omega_{\varphi(t)}$ vanishes trivially.

If we are able to select $\varphi(t)$ that satisfies all the above properties, then due to the assumptions on $f_0(x)$ that imply $f_0(x) \geq \omega(r)$ and the boundary control $\left. f(x,t) \right|_{\Sm_{d_0}} \geq \frac12 = \left. \omega_{\varphi(t)}(r) \right|_{\Sm_{d_0}},$
we will have $f(x,t) \geq \omega_{\varphi(t)}(r)$ on $\R^2 \setminus B_{d_0}$ for all times $t \geq t_0.$


For $d_0< r\leq 4R_0$, we need to ensure that
\[ -(r-d_0)\varphi'(t) \leq \frac{v_0}{16} \varphi. \]
Since only the region $r-d_0\leq \frac{d_1-d_0}{\varphi(t)}$ plays any role and $\varphi'(t)\leq 0$, it suffices to require that
\begin{equation*}
    (d_1-d_0)\d_t\big(1/\varphi(t)\big)\leq \frac{v_0}{16}.
\end{equation*}
Then we can choose
\begin{align}
    \varphi(t) = \frac{1}{1+\frac{v_0}{16(d_1-d_0)} (t-t_0)}, \quad \text{for}~0\leq t\leq t_*,
\end{align}
 where $t_*>0$ is the time when the support of the subsolution reaches $r = 4R_0.$ Namely, $t_*$ is determined by
 \begin{equation}
     d_0 +\varphi(t_*)(4R_0-d_0)=d_1,
 \end{equation}
 so that
 \begin{equation}\label{jun162}
     \varphi(t_*) = \frac{d_1-d_0}{4R_0-d_0},\quad \text{and}\quad t_*=\frac{16}{v_0}(4R_0-d_1) +t_0 \leq \frac{C_1 \gamma}{v_0^2} +t_0;
 \end{equation}
 the latest inequality holds if $\gamma$ is sufficiently large.
 When $t\geq t_*$, we just define $\varphi(t)\equiv \varphi(t_*) = \frac{d_1-d_0}{4R_0-d_0}$.
 Such definition of $\varphi(t)$ satisfies all the properties required for $\omega_{\varphi(t)}$ to be a subsolution,
 and so $f(x,t) \geq \omega_{\varphi(t)}(r)$ for all $|x| \geq d_0$ and $t \geq t_0.$
 Therefore, we conclude that
 \[   f(x,t) \geq \omega\left(d_0 +\frac{d_1-d_0}{4R_0-d_0}(r-d_0)\right) \] 
for all $t \geq t_*.$ In particular, at $r = 2R_0,$ we have
\[ 
d_0 +\frac{d_1-d_0}{4R_0-d_0}(2R_0-d_0) \leq d_0 + \frac12 (d_1-d_0) = \frac32 d_0. \]
Due to our assumptions on $\omega,$ we conclude that
\begin{equation}\label{aux49a} f(x,t) \geq f(2R_0,t) \geq \omega(3d_0/2) \geq \frac15 \end{equation}
for $|x| \leq 2R_0$ 
when $t \geq t_*;$ we used monotonicity of $f(x,t)$ in the first inequality.

Recall that $f_0 \in  C_0^{\infty}(B_{5/v_0})$ and $f_0(x) =1$ for $x \in B_{d_1}.$
Then by duality, for every $t \geq t_0,$
\[  \int_{B_{5/v_0}} u(x,t) \, dx \geq  \int_{\R^2} f_0(x) u(x,t)\,dx  = \int_{\R^2} f(x,t) u(x,t_0)\,dx \geq \frac{c_0 M_0}{5} \]
due to \eqref{aux49a} and \eqref{MassumpR0}.
\end{proof}

\section{Harnack's inequality and reaction in $B_1$}

By Theorem \ref{thm:mass_v0} and (\ref{ineq:rho_vs_u}), we have
\be\label{apr91}
\int_{B(0,\frac{5}{v_0})} \rho_1(x,t) dx \geq \frac{1}{20} M_0-\pi\theta \geq c_2 M_0,
\ee
for some $c_2 >0$ for all $t\in[t_*, \tau]$ with $t_*$ given by \eqref{jun162}, which satisfies the following using the definition $t_0=\frac{64(L-R_0)^2}{\gamma}$:
\be \label{aux411b} t_* \leq C\Big( \frac{(L-R_0)^2}{\gamma} + \frac{\gamma}{v_0^2}\Big). \ee 
The last inequality in \eqref{apr91} follows from the assumption $M_0 \gg \theta;$ 
specifically, it suffices to assume that $M_0 \geq 40\pi \theta,$ then one can take $c_2 = \frac{1}{40}.$


Therefore we know that a large mass of $\rho_1$ will be inside the ball of radius $\sim v_0^{-1}$ by the time $t_*$ at latest. We now need to show that
some of this mass will also appear in $B_1$ and react with the density $\rho_2.$ To prove this, we will prove a variant of Harnack inequality showing that the mass has to be spread
relatively uniformly over $B_{\frac{5}{v_0}}$ for times $t$ such that $\tau \geq t \geq t_*+ v_0^{-2}.$

Let us first look at the following equation without reaction term:
\begin{align}
    \d_t \rho -\Delta \rho +\nabla \cdot(b \rho) = 0,
\end{align}
where $\norm{b(\cdot,t)}_{L^{\infty}(\R^n)}\leq v_0$ for all $t\geq 0$.
The corresponding heat kernel $\Gamma(x,y,t,s)$, according to \cite{aronson1968non}, satisfies $\Gamma(x,y,t,s)=\tilde \Gamma(y,x,t,s)$, where $\tilde \Gamma$ is the heat kernel for the adjoint equation
\begin{align}\label{dd411}
    \d_t \phi -\Delta \phi - b \cdot\nabla \phi = 0.
\end{align}
We will use the following estimates of the fundamental solution to drift-diffusion equations in $n$ dimensions with uniformly bounded drift.
\begin{lemma}[\cite{hill1997estimates}, Theorem 2.1; \cite{hamel2020propagation}, Lemma 4.2] Given a bounded vector field $b(x,t)$ such that $\norm{b}_{L^{\infty}((0,+\infty)\times \R^n)}\leq B<\infty$, let $\Gamma(x,y,t,s)$ be the fundamental solution satisfying
\begin{equation}
    \begin{aligned}
        &\Gamma_t + b \cdot \nabla_x \Gamma = \Delta_x \Gamma\\
        & \Gamma(x,y,s,s) = \delta_y(x)
    \end{aligned}
\end{equation}
    for any $t>s\geq0$ and $y\in\R^n$. Then we have
        \begin{equation}\label{lowerbd}
    \begin{aligned}
        \Gamma(x,y,t,s)&\geq   \prod_{i=1}^n \Big(\frac{1}{\sqrt{4\pi (t-s)}}e^{-\frac{(|x_i-y_i|+B (t-s))^2}{4(t-s)}}-\frac{B}{4}\erfc\Big(\frac{|x_i-y_i|}{\sqrt{4(t-s)}}+\frac{B\sqrt{t-s}}{2}\Big)\Big).
        \end{aligned}
    \end{equation}
\end{lemma}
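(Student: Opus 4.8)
The plan is to establish this Gaussian-type lower bound, following the cited works, by exhibiting an explicit subsolution with a tensor-product structure and applying the parabolic comparison principle. Writing $\tau:=t-s$, I would introduce the one-variable profile
\[
\phi(z,\tau):=\frac{1}{\sqrt{4\pi\tau}}\,e^{-\frac{(|z|+B\tau)^2}{4\tau}}-\frac{B}{4}\,\erfc\!\Big(\frac{|z|}{2\sqrt\tau}+\frac{B\sqrt\tau}{2}\Big),
\]
together with the candidate barrier $w(x,t):=\prod_{i=1}^{n}\phi(x_i-y_i,\tau)$, so that the asserted estimate \eqref{lowerbd} is precisely the inequality $\Gamma(x,y,t,s)\ge w(x,t)$.

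The key computational fact I would verify is the identity $\partial_t w-\Delta_x w=-\sum_{i=1}^{n}B\,\sgn(x_i-y_i)\,\partial_{x_i}w$; that is, $w$ solves the equation with the piecewise-constant ``worst-case'' coordinatewise drift $-B\,\sgn(x_i-y_i)$. For one variable and $z>0$, the Gaussian term of $\phi$ is the $1$D heat kernel evaluated at the shifted point $z+B\tau$, hence solves the $1$D equation with constant drift $-B$; a short differentiation using $\erfc'(u)=-\tfrac{2}{\sqrt\pi}e^{-u^2}$ shows the $\erfc$ term solves the same equation; and — the reason the $\erfc$ correction is present at all — the one-sided $z$-derivatives of the two terms cancel at $z=0$. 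In fact the computation collapses to the clean formula $\partial_z\phi(z,\tau)=-\frac{z}{4\sqrt\pi\,\tau^{3/2}}\,e^{-(|z|+B\tau)^2/(4\tau)}$, from which one reads off that $\phi$ is $C^{2,1}$ across $z=0$ (so no distributional term is created), that $\phi$ is even and strictly decreasing in $|z|$, and that $\phi(\cdot,\tau)\to\delta_0$ as $\tau\downarrow0$. I would also record that $\phi\ge0$: with $u:=\tfrac{|z|}{2\sqrt\tau}+\tfrac{B\sqrt\tau}{2}$ one has $(|z|+B\tau)^2/(4\tau)=u^2$ and $\tfrac{B\sqrt\tau}{2}\le u$, so $\phi\ge0$ reduces to $\tfrac1{\sqrt\pi}e^{-u^2}\ge\tfrac{B\sqrt\tau}{2}\,\erfc(u)$, which follows from the standard tail bound $\erfc(u)\le\frac{1}{u\sqrt\pi}e^{-u^2}$.

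Granting these facts, the rest is bookkeeping. Since $\phi\ge0$ we have $w\ge0$, and $\partial_{x_i}w=\big(\prod_{j\ne i}\phi(x_j-y_j,\tau)\big)\,\partial_z\phi(x_i-y_i,\tau)$ has the sign of $-\sgn(x_i-y_i)$; hence for any drift $b$ with $\|b\|_{L^\infty}\le B$ (so $|b_i|\le B$ for each $i$),
\[
\partial_t w-\Delta_x w+b\cdot\nabla_x w=\sum_{i=1}^{n}\big(b_i+B\,\sgn(x_i-y_i)\big)\,\partial_{x_i}w\le0,
\]
because $\big(b_i+B\,\sgn(x_i-y_i)\big)\sgn(x_i-y_i)=b_i\,\sgn(x_i-y_i)+B\ge -B+B=0$ while $\partial_{x_i}w\,\sgn(x_i-y_i)\le 0$. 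Thus $w$ is a subsolution of the equation satisfied by $\Gamma(\cdot,y,\cdot,s)$, it decays rapidly at spatial infinity, and it carries the same initial datum $\delta_y$ in the distributional limit $\tau\downarrow0$; the comparison principle then yields $\Gamma\ge w$ on $(s,\infty)\times\R^n$, which is \eqref{lowerbd}.

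I expect two points to need the most care. The first is confirming that the $\erfc$ term is calibrated so that $\phi$ is genuinely smooth across the diagonal $z=0$, rather than merely continuous with a downward corner — the latter would insert an unfavorable Dirac mass into $\partial_{zz}\phi$ and break the subsolution inequality; the explicit formula for $\partial_z\phi$ above settles this. The second is justifying the comparison principle on all of $\R^n$ with a bounded but possibly discontinuous drift and the singular initial datum $\delta_y$; this is handled by the routine device of replacing $\delta_y$ with $w(\cdot,s+\eta)$, comparing on $[s+\eta,\infty)\times\R^n$ where both sides are bona fide functions decaying at infinity, and letting $\eta\downarrow0$ using continuity of $\Gamma$. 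Both are carried out in \cite{hill1997estimates,hamel2020propagation}.
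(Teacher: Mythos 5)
The paper does not prove this lemma at all: it is imported verbatim from the cited references (Hill, Theorem 2.1; Hamel--Henderson, Lemma 4.2), so there is no in-paper argument to compare against. Your proposal is a correct self-contained proof in exactly the spirit of those references: I checked the key computations, namely that for $z>0$ both the shifted Gaussian and the $\erfc$ term solve the one-dimensional equation with constant drift $-B$, that $\partial_z\phi(z,\tau)=-\frac{z}{4\sqrt{\pi}\,\tau^{3/2}}e^{-(|z|+B\tau)^2/(4\tau)}$ so that $\phi$ is $C^2$ across $z=0$, monotone in $|z|$ and nonnegative (via $\erfc(u)\le \frac{1}{u\sqrt{\pi}}e^{-u^2}$ and $u\ge \frac{B\sqrt{\tau}}{2}$), and that the sign bookkeeping $\big(b_i+B\,\sgn(x_i-y_i)\big)\partial_{x_i}w\le 0$ makes the tensor product $w$ a subsolution for every admissible drift; note also that the same computation shows $w$ recentred at any point is a subsolution, which is useful below. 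The only place to phrase with care is the treatment of the singular initial datum: as literally written, ``replace $\delta_y$ by $w(\cdot,s+\eta)$'' could be read circularly (it would presuppose $\Gamma(\cdot,y,s+\eta,s)\ge w(\cdot,\eta)$). The intended, correct version is to let $u_\eta$ solve the equation on $[s+\eta,\infty)$ with data $u_\eta(\cdot,s+\eta)=w(\cdot,\eta)$, conclude $u_\eta(x,t)\ge w(x-y,t-s)$ by classical comparison (equal data at $t=s+\eta$, $w$ a subsolution), and then pass to the limit using $u_\eta(x,t)=\int \Gamma(x,z,t,s+\eta)\,w(z-y,\eta)\,dz\to \Gamma(x,y,t,s)$ as $\eta\downarrow 0$, since $w(\cdot,\eta)\rightharpoonup\delta_y$ and $\Gamma$ is continuous; alternatively one can mollify $\delta_y$ and superpose recentred copies of $w$. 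With that clarification your argument is complete and consistent with the cited sources.
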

A corresponding upper bound on $\Gamma$ can also be found in \cite{hill1997estimates,hamel2020propagation}, but we will not need it for our application.
Recall that
\[ \erfc(x) = \frac{2}{\sqrt{\pi}} \int_x^\infty e^{-y^2}\,dy. \]
Note that in our case, the drift is of the form $b= \frac{\nabla c}{|\nabla c|}\Psi(|\nabla c|)$, and thus we may take $B=v_0$ in the bound (\ref{lowerbd}) to obtain the following estimates.
\begin{lemma}\label{mar601}
 Consider $n=2$. Suppose that the drift in the equation \eqref{dd411} satisfies $\|b\|_{L^\infty} \leq v_0.$
    If $ t = v_0^{-2}$ and $|x-y| \leq C_3 v_0^{-1},$ then the fundamental solution satisfies
    \begin{equation}\label{lowerG}
        \Gamma(x,y,t,0)\geq a v_0^2,
    \end{equation}
    where $a$ depends only on $C_3.$
\end{lemma}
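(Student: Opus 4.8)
The plan is to obtain the lower bound \eqref{lowerG} directly from the fundamental-solution estimate \eqref{lowerbd} with $B = v_0$ and $t = v_0^{-2}$, by tracking how the two competing terms inside the product over $i=1,2$ scale. First I would perform the natural change of variables that makes everything dimensionless: set $z_i = |x_i - y_i| v_0$, so that $|x-y| \leq C_3 v_0^{-1}$ becomes $\sqrt{z_1^2 + z_2^2} \leq C_3$, and in particular $0 \leq z_i \leq C_3$. With $t = v_0^{-2}$ one computes $B(t-s) = v_0 \cdot v_0^{-2} = v_0^{-1}$, $\sqrt{4(t-s)} = 2 v_0^{-1}$, and $\frac{1}{\sqrt{4\pi(t-s)}} = \frac{v_0}{2\sqrt{\pi}}$. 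Substituting these, each factor in the product \eqref{lowerbd} becomes
\[
\frac{v_0}{2\sqrt{\pi}} \exp\!\left(-\frac{(z_i v_0^{-1} + v_0^{-1})^2}{4 v_0^{-2}}\right) - \frac{v_0}{4}\,\erfc\!\left(\frac{z_i}{2} + \frac12\right) = v_0\left[\frac{1}{2\sqrt{\pi}} e^{-(z_i+1)^2/4} - \frac14 \erfc\!\left(\frac{z_i+1}{2}\right)\right].
\]
So each factor equals $v_0$ times a function $G(z_i) := \frac{1}{2\sqrt{\pi}} e^{-(z_i+1)^2/4} - \frac14 \erfc\!\left(\frac{z_i+1}{2}\right)$ of $z_i$ alone, and the product over $i=1,2$ is $v_0^2 \, G(z_1) G(z_2)$.

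The key remaining point is that $G(w) > 0$ for $w$ in the relevant range, and in fact is bounded below by a positive constant. Since $z_i \in [0, C_3]$, we have $w = z_i + 1 \in [1, C_3 + 1]$, a compact interval, so by continuity it suffices to check $G(w) > 0$ for every $w$ in that interval; then $a := \frac{1}{2}\min_{w \in [1, C_3+1]} G(w)^2$ (or any such positive constant depending only on $C_3$) works, giving $\Gamma(x,y,t,0) \geq a v_0^2$. To verify $G(w) > 0$ I would compare the Gaussian term against the complementary error function: using the standard tail bound $\erfc(s) = \frac{2}{\sqrt\pi}\int_s^\infty e^{-y^2}\,dy \leq \frac{1}{\sqrt\pi s} e^{-s^2}$ for $s > 0$, with $s = \frac{w}{2}$, one gets $\frac14 \erfc(\frac w2) \leq \frac{1}{4}\cdot\frac{2}{\sqrt\pi w} e^{-w^2/4} = \frac{1}{2\sqrt\pi w} e^{-w^2/4}$, so
\[
G(w) \geq \frac{1}{2\sqrt\pi} e^{-w^2/4}\left(1 - \frac1w\right),
\]
which is strictly positive as soon as $w > 1$. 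At the single endpoint $w = 1$ (i.e.\ $z_i = 0$) this crude bound is inconclusive, so there I would either use a slightly sharper tail estimate such as $\erfc(s) \leq e^{-s^2}$ (valid for $s \geq 0$, giving $G(1) \geq \frac{1}{2\sqrt\pi}e^{-1/4} - \frac14 e^{-1/4} = (\frac{1}{2\sqrt\pi} - \frac14)e^{-1/4} > 0$ since $\frac{1}{2\sqrt\pi} \approx 0.282 > 0.25$), or simply note that $G$ is continuous and $G(1) = \frac{1}{2\sqrt\pi}e^{-1/4} - \frac14\erfc(\tfrac12) > 0$ by a direct numerical check. Either way $G$ is continuous and positive on the whole compact interval $[1, C_3+1]$, hence bounded below by a positive constant depending only on $C_3$.

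I expect the main obstacle to be purely a matter of bookkeeping rather than a genuine difficulty: one must be careful that the two terms inside the bracket in \eqref{lowerbd} are a \emph{difference}, so the estimate is only useful when the Gaussian dominates, and the positivity barely holds at the worst point $w=1$ (where $\frac{1}{2\sqrt\pi} \approx 0.282$ only slightly exceeds $\frac14$). This means the crude $\erfc$ tail bound $\frac{1}{\sqrt\pi s}e^{-s^2}$ is not quite enough at the endpoint and a marginally sharper inequality (or a numerical evaluation) is needed there; everywhere else on $[1, C_3+1]$ the factor $(1 - 1/w)$ stays safely positive. Once this is settled, the dependence of $a$ on $C_3$ is transparent: a larger $C_3$ enlarges the interval on which we minimize $G$, and since $G(w) \sim \frac{1}{2\sqrt\pi} e^{-w^2/4}$ decays, $a$ decreases like $e^{-(C_3+1)^2/2}$, but it stays positive for any fixed $C_3$, which is all that is claimed.
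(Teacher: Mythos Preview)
Your proposal is correct and follows essentially the same route as the paper: both invoke \eqref{lowerbd} with $B=v_0$, $t=v_0^{-2}$, rescale so that each factor becomes $v_0$ times a dimensionless function of $\xi=\tfrac{d+v_0 t}{2\sqrt t}\in[\tfrac12,\tfrac{1+C_3}{2}]$, and then verify positivity of that function on this compact interval. The only minor difference is in the endpoint handling: you patch together the crude tail bound $\erfc(s)\le \tfrac{1}{\sqrt\pi\,s}e^{-s^2}$ for $w>1$ with the sharper bound $\erfc(s)\le e^{-s^2}$ at $w=1$ and then invoke continuity on a compact set, whereas the paper instead keeps track of the remainder $\tfrac{v_0}{\sqrt{4\pi}}\int_\xi^\infty(\tfrac{y}{\xi}-1)e^{-y^2}\,dy$ left over from the crude bound and observes it is uniformly $\gtrsim v_0$ on the whole $\xi$-interval---this gives a single estimate that works everywhere (including at $d=0$) without a separate endpoint argument, but the content is the same.
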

\begin{proof}
To prove \eqref{lowerG}, we will use \eqref{lowerbd}. Set $B=v_0,$ and let us introduce shortcut notation $d = |x_i-y_i|.$ Note that
\[ \frac{v_0}{4}\erfc\left(\frac{d}{\sqrt{4t}}+\frac{v_0\sqrt{t}}{2}\right) = \frac{v_0}{\sqrt{4\pi}} \int^\infty_{\frac{d+v_0 t}{\sqrt{4t}}}e^{-y^2}\,dy. \]
Also
\be\label{aux419a} \frac{v_0}{\sqrt{4\pi}} \int^\infty_{\frac{d+v_0 t}{\sqrt{4t}}}e^{-y^2}\,dy \leq  \frac{\sqrt{t} v_0}{\sqrt{\pi}(d+v_0 t)} \int^\infty_{\frac{d+v_0 t}{\sqrt{4t}}}e^{-y^2}y\,dy. \ee
Integrating the last integral and combining it with the first term in \eqref{lowerbd} that is equal to $\frac{1}{\sqrt{4\pi t}}e^{-\frac{(d-v_0 t)^2}{4t}},$
we get
\[ \frac{1}{\sqrt{4\pi t}} \left(1- \frac{v_0 t}{d+v_0 t} \right) e^{-\frac{(d-v_0 t)^2}{4t}} \geq 0. \]
Let us introduce the short-cut notation $\frac{d+v_0 t}{\sqrt{4t}} = \xi.$
On the other hand, another non-negative contribution to each factor in \eqref{lowerbd} comes from the difference of the right and left sides in \eqref{aux419a}.
It equals
\be\label{aux419b} \frac{v_0}{\sqrt{4\pi}} \int_\xi^\infty \left(\frac{y}{\xi}-1 \right) e^{-y^2}\,dy = \frac{v_0 \xi}{\sqrt{4\pi}} \int_1^\infty (z-1) e^{-\xi^2 z^2}\,dz. \ee
Observe that under assumptions of the lemma, we have
\[ \xi = \frac{d+v_0 t}{\sqrt{4t}} = \frac{d +v_0^{-1}}{2v_0^{-1}} \in [\frac12, \frac12 +C_3]. \]
Substituting this into \eqref{aux419b} leads to the lower bound $\gtrsim v_0.$
Given that there are two factors when $n=2,$ the estimate \eqref{lowerG} follows.
\end{proof}
We use the lower bound \eqref{lowerG} on $\Gamma$ to show that the density $\rho_1$ will spread over $B_1$ in a fairly uniform fashion, at least at times exceeding $t_*+v_0^{-2}.$
The last obstacle is handling the effects of reaction.

\begin{lemma}
Recall our assumption that $v_0^2 M_0 \gg \theta.$
Consider the equation
\begin{align}
    \d_t \rho_1 - \Delta \rho_1 +\nabla\cdot\Big(\rho_1 \frac{\nabla c}{|\nabla c|}\Psi(|\nabla c|)\Big) = -\epsilon \rho_1 \rho_2,
\end{align}
and suppose
\begin{equation}\label{aux411a} \int_{B_{\frac{5}{v_0}}} \rho_1(x,t) \,dx \geq c_2 M_0 \end{equation}
for all $t$ such that $\tau \geq t \geq t_*.$ Let $t$ be any time satisfying $\tau \geq t \geq t_* + v_0^{-2}.$ There exists 
a set $S(t) \subset B_1$ with
$|S(t)| \leq \frac{4 \pi \theta}{ac_2 M_0 v_0^2}$ such that for every $x \in B_1 \setminus S_t$ we have
\begin{equation}\label{rho1_lowerbd}
    \rho_1(x,t)\geq ac_2 v_0^2 M_0.
\end{equation}
Here $a$ is the constant from \eqref{lowerG} corresponding to $C_3 = 5.$
\end{lemma}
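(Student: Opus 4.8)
The plan is to peel off the reaction term via a Duhamel-type comparison, reduce to the drift-diffusion equation without reaction, and then use the Gaussian lower bound of Lemma~\ref{mar601} together with the mass bound \eqref{aux411a} to conclude. First I would fix a time $t$ with $\tau \geq t \geq t_* + v_0^{-2}$ and set $s := t - v_0^{-2} \geq t_*$, so that \eqref{aux411a} holds at time $s$. The idea is to compare $\rho_1$ on the interval $[s,t]$ with the solution $\bar\rho$ of the drift-diffusion equation $\partial_t \bar\rho - \Delta \bar\rho + \nabla\cdot(\bar\rho\, b) = 0$ (with $b = \frac{\nabla c}{|\nabla c|}\Psi(|\nabla c|)$, $\|b\|_\infty \leq v_0$) subject to $\bar\rho(\cdot,s) = \rho_1(\cdot,s)$. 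Since the reaction term $-\epsilon\rho_1\rho_2 \leq 0$ acts as a sink, a comparison-principle argument gives $\rho_1(x,t) \geq \bar\rho(x,t) - \int_s^t \text{(reaction loss)}$; more precisely, writing $\bar\rho(x,t) = \int \Gamma(x,y,t,s)\rho_1(y,s)\,dy$ and using the same duality/heat-kernel bookkeeping as in Section~2, the total mass lost to reaction on $[s,t]$ is at most $\int_0^\tau h(\sigma)\,d\sigma \leq \pi\theta$ (from the definition of $\tau$). Hence for $x \in B_1$,
\[
\rho_1(x,t) \;\geq\; \int_{B_{5/v_0}} \Gamma(x,y,t,s)\,\rho_1(y,s)\,dy \;-\; \big(\text{a reaction-loss density term}\big).
\]

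The cleaner route, which I would actually follow, is to avoid pointwise control of the reaction loss and instead argue on a set of large measure. On $B_{5/v_0}$ we have $|x-y|\leq \frac{10}{v_0} = C_3 v_0^{-1}$ with $C_3 = 5$ for $x\in B_1$; since $t - s = v_0^{-2}$, Lemma~\ref{mar601} gives $\Gamma(x,y,t,s) \geq a v_0^2$ for all such $x,y$, where $a = a(5)$. Therefore the "good" part satisfies, for every $x \in B_1$,
\[
\int_{B_{5/v_0}} \Gamma(x,y,t,s)\,\rho_1(y,s)\,dy \;\geq\; a v_0^2 \int_{B_{5/v_0}} \rho_1(y,s)\,dy \;\geq\; a c_2 v_0^2 M_0,
\]
using \eqref{aux411a} at time $s$. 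It remains to subtract off the reaction. Let $w(x,t)$ solve the drift-diffusion equation (no reaction) with data $\rho_1(\cdot,s)$ at time $s$; then $\rho_1 \geq w$ pointwise is false in general, so instead I track the difference: $v := w - \rho_1 \geq 0$ solves $\partial_t v - \Delta v + \nabla\cdot(vb) = \epsilon\rho_1\rho_2 \geq 0$ with $v(\cdot,s)=0$, so $v(x,t) = \int_s^t\!\!\int \Gamma(x,y,t,\sigma)\,\epsilon\rho_1(y,\sigma)\rho_2(y,\sigma)\,dy\,d\sigma$. Integrating in $x$ over $B_1$ and using $\int_{B_1}\Gamma(x,y,t,\sigma)\,dx \leq 1$ (the adjoint kernel is a sub-probability density), we get $\int_{B_1} v(x,t)\,dx \leq \int_s^t h(\sigma)\,d\sigma \leq \pi\theta$. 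Thus
\[
\int_{B_1}\big(w(x,t) - \rho_1(x,t)\big)\,dx \;\leq\; \pi\theta,
\]
and since $w(x,t) \geq a c_2 v_0^2 M_0$ on all of $B_1$, the set $S(t) := \{x\in B_1 : \rho_1(x,t) < a c_2 v_0^2 M_0\}$ satisfies $a c_2 v_0^2 M_0 \cdot |S(t)| \leq \int_{S(t)} (w - \rho_1)\,dx \leq \pi\theta$...

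Actually, to obtain the stated bound $|S(t)| \leq \frac{4\pi\theta}{a c_2 M_0 v_0^2}$ I would be slightly more careful: on $S(t)$ we have $w - \rho_1 > w - a c_2 v_0^2 M_0 \geq \frac{a c_2 v_0^2 M_0}{2}$ provided $w \geq \frac{3}{2} a c_2 v_0^2 M_0$ there, which holds if one replaces $c_2$ by $2c_2$ in the kernel step — i.e. one uses $\int_{B_{5/v_0}}\rho_1(\cdot,s) \geq c_2 M_0$ to get $w \geq a c_2 v_0^2 M_0$ but then redefines the threshold in \eqref{rho1_lowerbd} as half of that; since the statement allows adjusting the universal constant $a$, this is harmless. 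Then $|S(t)| \cdot \frac{a c_2 v_0^2 M_0}{2} \leq \pi\theta$, giving $|S(t)| \leq \frac{2\pi\theta}{a c_2 v_0^2 M_0} \leq \frac{4\pi\theta}{a c_2 v_0^2 M_0}$, and finally note that under the hypothesis $v_0^2 M_0 \gg \theta$ this set has small measure, so $B_1 \setminus S(t)$ is nonempty (in fact of measure close to $\pi$), which is what is needed downstream.

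The main obstacle I anticipate is making the Duhamel/comparison bookkeeping for the reaction term fully rigorous: one needs the representation $w - \rho_1 = \int_s^t\!\!\int \Gamma(x,y,t,\sigma)\,\epsilon\rho_1\rho_2\,dy\,d\sigma$ with the correct (forward, not adjoint) kernel, the non-negativity of $v$, and the bound $\int_{B_1}\Gamma(x,y,t,\sigma)\,dx \leq 1$ — the latter following from the fact that the spatial integral of the forward kernel against $e^{H}$-type weights, or simply against $1$ for a divergence-form drift equation, is conserved or non-increasing, so one must be slightly careful about which normalization holds for the drift $b$ (which is \emph{not} a gradient, only bounded). Since $b$ is merely bounded and divergence-form, $\int \Gamma(x,y,t,\sigma)\,dx$ over all of $\R^2$ equals $1$, hence the restriction to $B_1$ is $\leq 1$; this is clean. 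The Gaussian lower bound and the mass bound \eqref{aux411a} are then plugged in mechanically.
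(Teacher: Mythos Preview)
Your proposal is correct and is essentially the paper's own argument: Duhamel on $[t-v_0^{-2},t]$, the kernel lower bound of Lemma~\ref{mar601} combined with \eqref{aux411a} to get the pointwise lower bound $a c_2 v_0^2 M_0$ on the drift--diffusion part, the $L^1$ bound $\int_0^\tau h \leq \pi\theta$ on the reaction loss, and then Chebyshev to carve out the bad set $S(t)$. The only cosmetic difference is that the paper writes the Duhamel identity for $\rho_1$ directly (so your $w$ and $v$ never appear as separate objects, and the reaction term is simply the integral $g(x,t)=\epsilon\int_0^t\!\int\Gamma\,\rho_1\rho_2$), and defines $S(t)$ as the superlevel set $\{g>\tfrac12 a c_2 v_0^2 M_0\}$ rather than the sublevel set of $\rho_1$; this makes the Chebyshev step one line and avoids the threshold juggling you flagged. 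Your observation that the resulting lower bound is really $\tfrac12 a c_2 v_0^2 M_0$ (with $|S(t)|\le \tfrac{2\pi\theta}{a c_2 v_0^2 M_0}$) rather than the constants literally stated is accurate, and the paper's proof has the same harmless factor of two.
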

\begin{proof}
By the Duhamel's principle, we have that for any $t$ satisfying $\tau \geq t \geq t_*+v_0^{-2}$ and
for every $x\in B_1$,
\begin{equation}
\begin{aligned}
    \rho_1(x,t)&= \int_{\R^2} \Gamma(x,y, t, t- v_0^{-2}) \rho_1(y,t-v_0^{-2})dy -\epsilon \int_{t-v_0^{-2}}^t\int_{\R^2}\Gamma(x,y,t,s) \rho_1(y,s) \rho_2(y,s) dy ds\\
    &\geq \int_{B_{\frac{5}{v_0}}} \Gamma(x,y, t, t-v_0^{-2}) \rho_1(y,t-v_0^{-2})dy -\epsilon \int_{0}^t\int_{\R^2}\Gamma(x,y,t,s) \rho_1(y,s) \rho_2(y,s) dy ds\\
    &\geq ac_2 v_0^2 M_0 - g(x,t).
\end{aligned}
\end{equation}
Here in the second step we used the assumption \eqref{aux411a} (which we know holds by \eqref{apr91}), and the lower bound \eqref{lowerG}. We also defined
 $g(x,t):= \epsilon \int_{0}^t\int_{\R^2}\Gamma(x,y,t,s) \rho_1(y,s) \rho_2(y,s) dy ds$. Observe that
\begin{equation}
        \int_{\Rm^2} g(x,t) dx = \epsilon \int_0^t\int_{\Rm^2}\rho_1(y,s)\rho_2(y,s)dy ds = \int_{\Rm^2} (\rho_2(x,0)-\rho_2(x,t)) dx
        \leq\frac{1}{2} \|\rho_2(\cdot, 0)\|_1 \leq \pi \theta,
\end{equation}
as $t\leq \tau$.
Therefore, $g(x,t)$ can exceed $\frac12 ac_2 v_0^2 M_0$ only on a set $S(t)$ with $|S(t)| \leq \frac{2\pi \theta}{ac_2 v_0^2 M_0}.$
For every $x \in B_1 \setminus S(t),$ we must have $\rho_1(x,t) \geq  \frac12 ac_2 v_0^2 M_0.$
\end{proof}

We are ready to complete the proof of the main Theorem \ref{thm:main}:

\begin{proof}[Proof of Theorem \ref{thm:main}]
Consider any $t\in (t_*+ v_0^{-2}, \tau].$ 
We have
\[     \frac{d}{dt}\int_{B_1} \rho_2(x,t) dx = -\epsilon\int_{B_1} \rho_1(x,t) \rho_2(x,t)dx\leq -\epsilon c_3v_0^2M_0 \int_{B_1\setminus S(t)} \rho_2(x,t)dx, \]
where we set $c_2 = \frac12 ac_2.$
For all $t \leq \tau,$ we have $\int_{B_1} \rho_2(x,t)\,dx \geq \pi \theta.$ In addition, $\rho_2(x,t) \leq 2\theta$ for all $x,t.$ Due to estimate on the measure of $S(t),$
we have that \[ \int_{S(t)} \rho_2(x,t)\,dx \leq \frac{8 \pi \theta^2}{ac_2 M_0 v_0^2} \leq \frac12 \pi \theta \]
since $v_0^2 M_0 \gg \theta.$ Therefore,
\[ \int_{B_1 \setminus S(t)} \rho_2(x,t) \, dx \geq \frac12 \pi \theta \geq \frac14 \int_{B_1} \rho_2(x,t)\,dx. \]
Hence
\[  \frac{d}{dt}\int_{B_1} \rho_2(x,t)\, dx \leq -\epsilon \frac14 c_3v_0^2M_0 \int_{B_1} \rho_2(x,t)\, dx. \]
We conclude that there exists a universal constant $C$ such that
$\tau \leq t_* + v_0^{-2} + \frac{C}{\epsilon v_0^2 M_0}.$ Given the expression for $t_*$ in \eqref{aux411b} and since we assume that $\gamma \gg 1,$
we finally arrive at the bound \[ \tau \leq C \left( \frac{L^2}{\gamma} + \frac{\gamma}{v_0^2}  + \frac{1}{\epsilon v_0^2 M_0}\right). \]
\end{proof}


\section*{Acknowledgment}

AK has been partially supported by the NSF-DMS award 2306726. YY has been supported by the NUS startup grant, MOE Tier 1 grant, and the Asian Young Scientist Fellowship.

\end{document}